\documentclass[sort&compress,3p,final]{elsarticle}

\usepackage{lineno,hyperref}
\modulolinenumbers[5]

\journal{Journal of Statistical Planning and Inference}










\usepackage{enumerate,amsmath,amssymb,dsfont,eucal,multirow,bm,booktabs,amsthm}
\usepackage[justification=centering]{caption}

\newtheorem{theorem}{Theorem}[section]
\newtheorem{lemma}[theorem]{Lemma}

\newtheorem{algorithm}[theorem]{Algorithm}
\newdefinition{remark}[theorem]{Remark}

\renewcommand{\P}{\mathbf P}
\newcommand{\E}{\mathbf E}
\newcommand{\R}{\mathbb R}
\newcommand{\N}{\mathbb N}
\newcommand{\ind}{\mathbf1}
\newcommand*{\abs}[1]{\left|#1\right|}
\newcommand{\eps}{\varepsilon}
\newcommand{\F}{\mathcal F}

\newcommand*{\set}[1]{\left\{#1\right\}}
\DeclareMathOperator{\var}{Var}


\makeatletter
\def\ps@pprintTitle{%
 \let\@oddhead\@empty
 \let\@evenhead\@empty
 \def\@oddfoot{}%
 \let\@evenfoot\@oddfoot}
\makeatother
\begin{document}
\begin{frontmatter}

\title{Hypothesis testing of the drift parameter sign for fractional Ornstein--Uhlenbeck process}

\author[addressone]{Alexander Kukush}
\ead{alexander\_kukush@univ.kiev.ua}
\author[addresstwo]{Yuliya Mishura}
\ead{myus@univ.kiev.ua}
\author[addresstwo]{Kostiantyn Ralchenko}
\ead{k.ralchenko@gmail.com}
\address[addressone]{Department of Mathematical Analysis, Taras Shevchenko National University of Kyiv,\\
64 Volodymyrska, 01601 Kyiv, Ukraine}
\address[addresstwo]{Department of Probability Theory, Statistics and Actuarial Mathematics,\\ Taras Shevchenko National University of Kyiv,
64 Volodymyrska, 01601 Kyiv, Ukraine}

\begin{abstract}
We consider the fractional Ornstein--Uhlenbeck process with an unknown drift parameter and known Hurst parameter $H$.
We propose a new method to test the hypothesis of the sign of the parameter and prove the consistency of the test.
Contrary to the previous works, our approach is applicable for all $H\in(0,1)$.
We also study the estimators for drift parameter for continuous and discrete observations and prove their strong consistency for all $H\in(0,1)$.
\end{abstract}

\begin{keyword}
fractional Brownian motion \sep fractional Ornstein--Uhlenbeck process \sep hypothesis testing \sep
drift parameter estimator \sep strong consistency \sep discretization
\MSC[2010] 60G22\sep 62F03\sep 62F05\sep 62F10\sep 62F12
\end{keyword}

\end{frontmatter}


\section{Introduction}

Let $(\Omega,\F,\P)$ be a probability space, and
$B^H=\set{B^H_t, t\in\R}$
be a fractional Brownian motion with Hurst parameter $H\in(0,1)$ on this probability space, that is a  centered Gaussian process with covariance function
\[
\E B^H_tB^H_s=\frac12\left(\abs{t}^{2H}+\abs{s}^{2H}-\abs{t-s}^{2H}\right),
\quad t,s\in\R.
\]
Since
$\E\left(B^H_t-B^H_s\right)^2=\abs{t-s}^{2H}$
and the process $B^H$ is Gaussian,
it has a continuous modification by Kolmogorov's theorem.
In what follows we consider such modification.

The present paper deals with the inference problem associated with the Langevin equation
\begin{equation}\label{eq:sde}
X_t=x_0+\theta\int_0^tX_s\,ds+B^H_t,
\quad t\ge0,
\end{equation}
where $x_0\in\R$, and $\theta\in\R$ is an unknown drift parameter.
This equation has a unique solution, see~\cite{CKM}.
In order to avoid integration with respect to the fractional Brownian motion for $0<H<1/2$, we can write this solution in the following form
\begin{equation}\label{eq:solution}
X_t=x_0e^{\theta t}+\theta e^{\theta t}\int_0^t e^{-\theta s}B^H_s\,ds+B^H_t,
\quad t\ge0.
\end{equation}
The process $X=\set{X_t, t\ge0}$ is called a \emph{fractional Ornstein--Uhlenbeck process}~\cite{CKM}.
It is a Gaussian process, consequently  its one-dimen\-sional distributions   are normal,  with mean
$x_0e^{\theta t}$
and variance
\[
v(\theta,t)=H\int_0^ts^{2H-1}\left(e^{\theta s}+e^{\theta(2t-s)}\right)ds,
\]
see Lemma~\ref{l:distr} in Appendix.

 The estimation problem for the drift parameter $\theta$ in the model~\eqref{eq:sde} was studied in many works.
 We refer to the paper~\cite{kumirase} for the extended survey of these results.
 The MLE's were studied in \cite{KB, Tanaka13, Tanaka15} for $H\geq 1/2$ and in the paper \cite{tudorviens} for $H<1/2$.
 Note that the MLE is hardly discretized because it contains the stochastic integrals with singular kernels.
 Therefore, several nonstandard estimators have been proposed recently.
 In particular,  for $\theta<0$ (the ergodic case) and $H\geq 1/2$  Hu and Nualart \cite{HN} constructed the analog of the least-squares estimator of the form
 \begin{equation}\label{eq:LSE}
 \hat{\theta}_T=\frac{\int_0^TX_tdX_t}{\int_0^TX^2_tdt},
 \end{equation}
where the integral $\int_0^TX_tdX_t$ is the divergence-type one.
 As an alternative, they considered  the estimator
 \begin{equation}\label{eq:HuNu}
\hat\theta_T=-\left(\frac{1}{H\Gamma(2H)T}\int_0^TX_t^2dt\right)^{-\frac{1}{2H}}.
 \end{equation}
In the non-ergodic case, when $\theta>0$,  Belfadli et al.~\cite{BESO} proposed the estimator
 \begin{equation}\label{eq:Belfadli}
\hat\theta_T
=\frac{X_T^2}{2\int_0^TX_t^2dt}
 \end{equation}
and proved its strong consistency for $H>1/2$.
Note that in this case the estimator \eqref{eq:Belfadli} coincides with the estimator \eqref{eq:LSE}, where the integral $\int_0^TX_tdX_t$ is understood in the path-wise sense.
In the papers  \cite{CES,Es-Sebaiy,ESN,HuSong13,kumirase,xiaoZX} the discretized estimators were proposed.

The methods of constructing the estimators and their asymptotic properties  essentially depend on the sign of unknown drift parameter $\theta$.
In particular, the estimator~\eqref{eq:HuNu} is based on the ergodicity and does not work in the non-ergodic case. Similarly, the estimator~\eqref{eq:Belfadli} converges to zero if $\theta<0$, see remark at the end of Sec.~3 in \cite{HN}.

The cases $H > 1/2$ (long-range dependence) and $H < 1/2$ (short-range dependence) also differ substantially.
For now, the case $H\ge1/2$ has been deeply investigated.
But we can mention only few papers devoted to the inference problem in the case $H<1/2$ (\cite{kumirase,tudorviens}).
 However, the observations of the real financial markets demonstrate that the Hurst index often falls below the level of $1/2$, taking values around 0.45--0.49 (\cite{multi}).
 If the trajectory of the process is observed continuously, then the case $H<1/2$ can be reduced to the case $H\ge1/2$ by the integral transformation of Jost~\cite[Cor.~5.2]{Jost06}. Moreover, recently Tanaka~\cite{Tanaka15} has shown that the distributions of the MLE for $H$ and $1-H$ coincide. Note that these results are not applicable in the case of discrete-time observations. Mention also that the discretized estimator proposed in \cite{kumirase} for $H<1/2$, in reality works properly only for $\theta\geq 0$ and   apparently does not work in   the ergodic case. In general, the problem of the discretization for $H<1/2$ and $\theta<0$ is open.

The above discussion motivates the hypothesis testing of the sign of drift parameter in the model~\eqref{eq:sde}.
The interest to this problem is also connected with the stability properties of the solution to the equation~\eqref{eq:sde}, which also depend on the sign of $\theta$.
For $H\ge1/2$, this problem was studied by Moers~\cite{Moers12}.
He constructed a test using the estimator
\begin{equation}\label{eq:est-Moers}
\tilde\theta_{T,H}=\frac{X_T^2-X_0^2}{2\int_0^TX_t^2dt}
-\left(\frac{1}{H\Gamma(2H)T}\int_0^TX_t^2dt\right)^{-\frac{1}{2H}}.
\end{equation}
The exact distribution of $\tilde\theta_{T,H}$ is not known, and the test is based on the asymptotic distribution of $T\tilde\theta_{T,H}$. The values of the corresponding test statistic should be compared with quantiles of the random variable
\begin{equation}\label{eq:distr-Moers}
\frac{\left(B^H_1\right)^2}{2\int_0^1\left(B^H_t\right)^2dt}
-\left(\frac{1}{H\Gamma(2H)}\int_0^1\left(B^H_t\right)^2dt\right)^{-\frac{1}{2H}},
\end{equation}
and the quantiles can be obtained by Monte Carlo simulation. The test can be used for the testing three types of hypothesis: $H_0\colon\theta\ge0$ against $H_1\colon\theta<0$, $H_0\colon\theta\le0$ against $H_1\colon\theta>0$,
and $H_0\colon\theta=0$ against $H_1\colon\theta\ne0$.
The consistency of the test is proved only for $H\in[1/2,3/4)$ for a simple alternative $\theta_1<0$, and for $H\in[1/2,1)$ for $\theta_1>0$.
Tanaka~\cite{Tanaka13,Tanaka15} considered  the testing of the hypothesis $H_0\colon\theta=0$ against the alternatives $H_1\colon\theta<0$ and $H_1\colon\theta>0$. He proposed tests based on the MLE (for both alternatives) and on the minimum contrast estimator (only for the ergodic case). Those tests were considered also for $H\ge1/2$.
To the best of our knowledge, there are no tests, suitable for the discrete-time observations of the process.

In the present paper we propose comparatively simple test for testing the null hypothesis $H_0\colon\theta\le0$ against the alternative $H_1\colon\theta>0$.
The main advantage of our approach is that it can be used for any $H\in(0,1)$. Moreover, the test is based on the observation of the process $X$ at one point, therefore, it is applicable for both continuous and discrete cases.
The distribution of the test statistic is computed explicitly, and the power of test can be found numerically for any given simple alternative.
Also we consider the hypothesis testing $H_0\colon\theta\ge\theta_0$ against $H_1\colon\theta\le0$, where $\theta_0\in(0,1)$ is some fixed number.
Unfortunately, our approach does not enable to test the hypothesis $H_0\colon\theta=0$ against the two-sided alternative $H_1\colon\theta\ne0$.

The second goal of the paper is to propose the estimators for the unknown drift parameter, which can be applied for all $H\in(0,1)$.
First, we prove that estimators~\eqref{eq:HuNu} for the ergodic case and~\eqref{eq:Belfadli} for the non-ergodic one are strongly consistent for all $H\in(0,1)$, not only for $H\ge1/2$. Then we consider the discretized versions of these estimators and prove their strong consistency.
Thus our results generalize the corresponding strong consistency results from~\cite{BESO,CES,Es-Sebaiy,ESN,HN,HuSong13,xiaoZX} for the case of arbitrary $H$.

The paper is organized as follows.
In Section~\ref{sec:Hyp} the problem of hypothesis testing for the sign of the drift parameter $\theta$ is considered.
In Section~\ref{sec:Est} we study the strong consistency of estimators for $\theta$.
Section~\ref{sec:Num} is devoted to numerics.
In Appendix we get some auxiliary results. In particular, we calculate the first two moments  of the fractional Ornstein--Uhlenbeck process.

\section{Hypothesis testing of the  drift parameter sign}\label{sec:Hyp}

\subsection{Test statistic}
For hypothesis testing of the sign of the parameter $\theta$ we construct a test based on the asymptotic behavior of the random variable
\begin{equation}\label{eq:Z}
Z(t)=\frac{\log^+\log\abs{X_t}}{\log t}, t>1.
\end{equation}
The following result explains the main idea. It is based on the different asymptotic behavior of the fractional Ornstein--Uhlenbeck process with positive drift parameter and negative one.
\begin{lemma}
The value of  $Z(t)$
converges a.\,s.\ to 1 for $\theta>0$, and to 0 for $\theta\le0$, as $t\to\infty$.
\end{lemma}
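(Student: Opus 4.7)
My plan is to work directly from the explicit representation \eqref{eq:solution} and split into two cases according to the sign of $\theta$. Throughout I will use the standard a.s.\ polynomial bound $|B^H_s|\le C_\omega s^{H+\eps}$ for all $s$ sufficiently large (with $\eps>0$ arbitrary), which follows from the fractional-Brownian LIL or from Kolmogorov's continuity theorem combined with Borel--Cantelli.

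\emph{Case $\theta>0$.} Dividing \eqref{eq:solution} by $e^{\theta t}$ yields
\[
e^{-\theta t}X_t = x_0 + \theta\int_0^t e^{-\theta s}B^H_s\,ds + e^{-\theta t}B^H_t.
\]
The polynomial bound on $B^H$ makes $e^{-\theta s}|B^H_s|$ integrable on $[0,\infty)$ a.s., so the integral converges a.s.\ to a finite Gaussian random variable, and $e^{-\theta t}B^H_t\to 0$ a.s. Hence $e^{-\theta t}X_t\to\zeta_\infty:=x_0+\theta\int_0^\infty e^{-\theta s}B^H_s\,ds$ a.s. Since $\zeta_\infty$ is Gaussian with strictly positive variance, $\zeta_\infty\ne 0$ a.s., so on this full-probability event $\log|X_t|=\theta t+\log|\zeta_\infty|+o(1)$ and therefore $\log^+\log|X_t|=\log t+\log\theta+o(1)$, from which $Z(t)\to 1$.

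\emph{Case $\theta\le 0$.} Here I show instead that $|X_t|$ grows at most polynomially. For $\theta=0$ we have $X_t=x_0+B^H_t$, and $|X_t|=O(t^{H+\eps})$ a.s.\ is immediate. For $\theta<0$, the term $x_0 e^{\theta t}$ is bounded; for the integral term,
\[
\Bigl|\theta e^{\theta t}\int_0^t e^{-\theta s}B^H_s\,ds\Bigr| \le C_\omega|\theta|e^{\theta t}\int_0^t e^{-\theta s}s^{H+\eps}\,ds,
\]
and by L'Hospital the last integral is asymptotic to $t^{H+\eps}e^{-\theta t}/|\theta|$, making the whole term $O(t^{H+\eps})$. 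Combined with $|B^H_t|=O(t^{H+\eps})$, this gives $|X_t|=O(t^{H+\eps})$ a.s., hence $\log^+\log|X_t|\le\log\log t+O(1)=o(\log t)$ and $Z(t)\to 0$.

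The only step requiring some care is verifying $\zeta_\infty\ne 0$ a.s.\ when $\theta>0$. The cleanest justification is that $\zeta_\infty$ is Gaussian and that $\var\bigl(\theta\int_0^\infty e^{-\theta s}B^H_s\,ds\bigr)$ is strictly positive, for instance by writing this variance as a double integral of the fractional-Brownian covariance kernel against the non-vanishing test function $e^{-\theta s}$. Consequently $\zeta_\infty$ has an absolutely continuous law and charges the singleton $\{0\}$ with probability zero.
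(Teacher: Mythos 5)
Your proposal is correct and takes essentially the same route as the paper: your case $\theta>0$, via $e^{-\theta t}X_t\to\zeta_\infty$ a.s.\ with a nondegenerate Gaussian limit, is precisely the paper's Lemma~\ref{l:as_non-erg}, and your case $\theta\le0$, via polynomial growth of $\abs{X_t}$, matches the paper's use of the bound \eqref{eq:est_KMRS_neg} (your $O(t^{H+\eps})$ in place of $(1+t^H\log^2 t)\zeta$ is equally sufficient, since only sub-exponential growth of $\abs{X_t}$ is needed for $Z(t)\to0$). The only differences are presentational: you re-derive both growth facts inline from \eqref{eq:solution} and the pathwise bound $\abs{B^H_s}=O(s^{H+\eps})$ instead of citing the auxiliary lemmas, and you make explicit the nondegeneracy check $\var\zeta_\infty>0$ that the paper compresses into the remark that $\xi_\theta$ is a normal random variable, hence $0<\abs{\xi_\theta}<\infty$ a.s.
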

\begin{proof}
For $\theta>0$, Lemma~\ref{l:as_non-erg} implies the convergence
\[
\log\abs{X_t}-\theta t\to\log\abs{\xi_\theta}
\quad\text{a.\,s., as }t\to\infty,
\]
where $\xi_\theta$ is a normal random variable, hence, $0<\abs{\xi_\theta}<\infty$ a.\,s.
Therefore,
\[
\frac{\log\abs{X_t}}{t}\to\theta
\quad\text{a.\,s., as }t\to\infty.
\]
It means that there exists $\Omega'\subset\Omega$ such that $\P(\Omega')=1$ and for any $\omega\in\Omega'$ there exists $t(\omega)$ such that for  $t\geq t(\omega)$ $\log\abs{X_t}>0$.
Hence, for   $t\geq t(\omega)$ we have that
\[
\abs{\frac{\log^+\log\abs{X_t}}{\log t}-1}=\abs{\frac{\log\log\abs{X_t}}{\log t}-1}
=\abs{\frac{\log\log\abs{X_t}-\log t}{\log t}}
=\abs{\frac{\log\frac{\log\abs{X_t}}{t}}{\log t}}
\to0
\]
 a.\,s.,  as $t\to\infty$.
For $\theta\le0$, it follows from~\eqref{eq:est_KMRS_neg} that
\begin{equation*}\begin{gathered}
\abs{Z(t)}
\le\abs{\frac{\log^+\left(\log\left(1+t^H\log^2t\right)+\log\zeta\right)}{\log t}}=\abs{\frac{\log\left(\log\left(1+t^H\log^2t\right)+\log\zeta\right)}{\log t}}
\\ \sim\abs{\frac{\log\left(\log\left(t^H\log^2t\right)\right)}{\log t}}
\to0,\end{gathered}\end{equation*} as $t\to\infty.$
\end{proof}

The next result gives the cdf of $Z(t)$.
Let $\Phi$ and $\varphi$ denote the cdf and pdf, respectively,  of the standard normal variable.

\begin{lemma}\label{l:sup}
For $t>1$ the probability
$g(\theta,x_0,t,c)=\P(Z(t)\le c)$
is given by
\begin{equation}\label{eq:g}
g(\theta,x_0,t,c)=\Phi\left(\frac{e^{t^c}-x_0e^{\theta t}}{\sqrt{v(\theta,t)}}\right)+\Phi\left(\frac{e^{t^c}+x_0e^{\theta t}}{\sqrt{v(\theta,t)}}\right)-1,
\end{equation}
and $g$ is a decreasing function of $\theta\in\R$.
\end{lemma}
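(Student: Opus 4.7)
The plan is to reduce the event $\set{Z(t)\le c}$ to a condition on $\abs{X_t}$ and then apply the Gaussian law of $X_t$ furnished by Lemma~\ref{l:distr}. First, unfolding the definition $\log^+ y = \max(\log y,0)$: on $\set{\abs{X_t}\le e}$ one has $\log^+\log\abs{X_t}=0$, so for $c\ge 0$ the inequality $\log^+\log\abs{X_t}\le c\log t=\log(t^c)$ is automatic, while on $\set{\abs{X_t}>e}$ it simplifies to $\log\log\abs{X_t}\le\log(t^c)$, i.e.\ $\abs{X_t}\le e^{t^c}$. Since $e^{t^c}\ge e$ for $t>1$ and $c\ge 0$, the two cases amalgamate into the single event $\set{\abs{X_t}\le e^{t^c}}$.

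Using that $X_t\sim N(x_0e^{\theta t},v(\theta,t))$ by Lemma~\ref{l:distr}, I would then write
\[
\P\bigl(\abs{X_t}\le e^{t^c}\bigr)=\Phi\!\left(\frac{e^{t^c}-x_0e^{\theta t}}{\sqrt{v(\theta,t)}}\right)-\Phi\!\left(\frac{-e^{t^c}-x_0e^{\theta t}}{\sqrt{v(\theta,t)}}\right),
\]
and the symmetry $\Phi(-x)=1-\Phi(x)$ converts the second term, giving~\eqref{eq:g}.

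For the monotonicity in $\theta$ I would avoid a direct differentiation of the sum of two $\Phi$'s by passing to the noncentral chi-square representation. With $K=e^{t^c}$, $m=x_0e^{\theta t}$ and $\lambda(\theta)=m^2/v(\theta,t)$, the standardization $X_t/\sqrt{v(\theta,t)}\sim N(m/\sqrt{v(\theta,t)},1)$ yields $X_t^2/v(\theta,t)\sim\chi^2_1(\lambda(\theta))$, so $g=F_{\chi^2_1(\lambda(\theta))}\!\bigl(K^2/v(\theta,t)\bigr)$. Since the noncentral chi-square CDF is increasing in its argument and decreasing in the noncentrality parameter, it suffices to show (i) that $v(\theta,t)$ is nondecreasing in $\theta$, so $K^2/v(\theta,t)$ decreases, and (ii) that $\lambda(\theta)$ is nondecreasing in $\theta$. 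Both follow by differentiating $v(\theta,t)=H\int_0^t s^{2H-1}(e^{\theta s}+e^{\theta(2t-s)})\,ds$ under the integral sign: $v'_\theta$ is a positive integral, giving (i), and
\[
2t\,v(\theta,t)-v'_\theta(\theta,t)=H\int_0^t s^{2H-1}\bigl[(2t-s)e^{\theta s}+s\,e^{\theta(2t-s)}\bigr]\,ds>0,
\]
so $\tfrac{d}{d\theta}\log\lambda(\theta)=2t-v'_\theta/v>0$, yielding (ii). In the degenerate case $x_0=0$, $\lambda\equiv 0$ and only (i) is needed.

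The formula itself is essentially a routine Gaussian computation once the event has been rewritten in terms of $\abs{X_t}$, so the genuine obstacle is the monotonicity: because both the mean and the variance of $X_t$ depend on $\theta$, the dependence of $g$ on $\theta$ is not transparent from~\eqref{eq:g}. The chi-square representation cleanly separates these two effects and reduces the whole question to the two elementary inequalities for $v$ and $v'_\theta$ displayed above.
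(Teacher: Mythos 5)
Your proof is correct, and while the distributional formula is derived exactly as in the paper --- reducing $\set{Z(t)\le c}$ to $\set{\abs{X_t}\le e^{t^c}}$ via monotonicity of $\log^+$ (you are in fact more careful here, noting that the reduction requires $c\ge 0$; since $Z(t)\ge 0$, the stated formula is false for $c<0$, a restriction the paper leaves implicit but which is harmless because only $c\in(0,1)$ is ever used) and then applying Lemma~\ref{l:distr} with $\Phi(-x)=1-\Phi(x)$ --- your monotonicity argument takes a genuinely different route. The paper differentiates \eqref{eq:g} directly in $\theta$ (after reducing to $x_0\ge0$ by evenness) and splits $\partial g/\partial\theta$ into a strictly negative term controlled by $v'_\theta>0$ and a non-positive term controlled by $tv-\tfrac12 v'_\theta>0$ together with the comparison $\varphi\bigl(\frac{e^{t^c}-x_0e^{\theta t}}{\sqrt{v}}\bigr)\ge\varphi\bigl(\frac{e^{t^c}+x_0e^{\theta t}}{\sqrt{v}}\bigr)$. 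You instead factor $g=F_{\chi^2_1(\lambda(\theta))}\bigl(K^2/v(\theta,t)\bigr)$ and use that the noncentral chi-square cdf is increasing in its argument and decreasing in the noncentrality; your two integral inequalities, $v'_\theta>0$ and $2tv-v'_\theta>0$, are exactly the paper's (the latter is the paper's $tv-\tfrac12 v'_\theta>0$ doubled). What your packaging buys: a transparent separation of the variance effect from the mean effect, automatic handling of the sign of $x_0$ (only $x_0^2$ enters $\lambda$), correct treatment of the degenerate case $x_0=0$, and strictness of the decrease already from the variance effect, since $F_{\chi^2_1(\lambda)}$ is strictly increasing on $(0,\infty)$. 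What it costs: the decrease of $F_{\chi^2_1(\lambda)}(x)$ in $\lambda$ is imported as a known fact. It is indeed classical (e.g.\ via the Poisson-mixture representation of the noncentral chi-square), but note that for one degree of freedom its direct verification, $\partial_\mu\bigl[\Phi(\sqrt{x}-\mu)-\Phi(-\sqrt{x}-\mu)\bigr]=-\varphi(\sqrt{x}-\mu)+\varphi(\sqrt{x}+\mu)\le0$ for $\mu\ge0$, is precisely the two-$\varphi$ comparison the paper performs explicitly --- so the two proofs rest on the same elementary inequalities, repackaged. A citation or a one-line proof of that fact would make your argument fully self-contained.
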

\begin{proof}
Using Lemma~\ref{l:distr} and taking into account that $\log^+x$ is a non-de\-creas\-ing function, we obtain
\begin{align*}
\P(Z(t)\le c)&=\P\left(\abs{X_t}\le e^{t^c}\right)
=\Phi\left(\frac{e^{t^c}-x_0e^{\theta t}}{\sqrt{v(\theta,t)}}\right)-\Phi\left(\frac{-e^{t^c}-x_0e^{\theta t}}{\sqrt{v(\theta,t)}}\right)\\
&=\Phi\left(\frac{e^{t^c}-x_0e^{\theta t}}{\sqrt{v(\theta,t)}}\right)+\Phi\left(\frac{e^{t^c}+x_0e^{\theta t}}{\sqrt{v(\theta,t)}}\right)-1.
\end{align*}
Let us prove the monotonicity of the function $g$ with respect to $\theta$.
Note that $g$ is an even function with respect to $x_0$.
Therefore, it suffices to consider only the case $x_0\ge0$.
The partial derivative equals
\begin{equation}\label{eq:deriv_g}
\begin{split}
\frac{\partial g}{\partial \theta}
&=\varphi\left(\frac{e^{t^c}-x_0e^{\theta t}}{\sqrt{v(\theta,t)}}\right)\left(-x_0te^{\theta t}v^{-\frac12}(\theta,t)-\frac12v^{-\frac32}(\theta,t)v'_\theta(\theta,t)\left(e^{t^c}-x_0e^{\theta t}\right)\right)\\
&\quad+\varphi\left(\frac{e^{t^c}+x_0e^{\theta t}}{\sqrt{v(\theta,t)}}\right)\left(x_0te^{\theta t}v^{-\frac12}(\theta,t)-\frac12v^{-\frac32}(\theta,t)v'_\theta(\theta,t)\left(e^{t^c}+x_0e^{\theta t}\right)\right)\\
&=-\frac12v^{-\frac32}(\theta,t)v'_\theta(\theta,t)e^{t^c}
\left(\varphi\left(\frac{e^{t^c}-x_0e^{\theta t}}{\sqrt{v(\theta,t)}}\right)+\varphi\left(\frac{e^{t^c}+x_0e^{\theta t}}{\sqrt{v(\theta,t)}}\right)\right)\\
&\quad-x_0e^{\theta t}v^{-\frac32}(\theta,t) \left(tv(\theta,t)-\frac12v'_\theta(\theta,t)\right)
\left(\varphi\left(\frac{e^{t^c}-x_0e^{\theta t}}{\sqrt{v(\theta,t)}}\right)-\varphi\left(\frac{e^{t^c}+x_0e^{\theta t}}{\sqrt{v(\theta,t)}}\right)\right).
\end{split}
\end{equation}
Since
\begin{equation}\label{deriv_v}
v'_\theta(\theta,t)=H\int_0^ts^{2H-1}\left(se^{\theta s}+(2t-s)e^{\theta(2t-s)}\right)ds>0,
\end{equation}
we see that the first term in the left-hand side of~\eqref{eq:deriv_g} is negative.
Let us consider the second term.
From~\eqref{eq:v} and \eqref{deriv_v} it follows that
\[
tv(\theta,t)-\tfrac12v'_\theta(\theta,t)=
H\int_0^ts^{2H-1}\left(\left(t-\tfrac12s\right)e^{\theta s}+\tfrac12se^{\theta(2t-s)}\right)ds>0.
\]
Since
$\abs{e^{t^c}-x_0e^{\theta t}}\le e^{t^c}+x_0e^{\theta t}$
for $x_0\ge0$, we have
\[\varphi\left(\frac{e^{t^c}-x_0e^{\theta t}}{\sqrt{v(\theta,t)}}\right)
-\varphi\left(\frac{e^{t^c}+x_0e^{\theta_1 t}}{\sqrt{v(\theta,t)}}\right)
\ge0.
\]
Thus, the second term in the left-hand side of~\eqref{eq:deriv_g} is non-positive.
Hence, $\frac{\partial g}{\partial\theta}<0$.
\end{proof}

\subsection{Testing the hypothesis \texorpdfstring{$H_0\colon\theta\le0$}{H0: theta<=0} against \texorpdfstring{$H_1\colon\theta>0$}{H1: theta>0}}
We consider the test with the following procedure of
testing the hypothesis $H_0\colon\theta\le0$ against the alternative $H_1\colon\theta>0$.
For a given significance level $\alpha$, and for sufficiently large value of $t$ we choose a threshold $c=c_t\in(0,1)$, see Lemma~\ref{l:c_t}.
Further, when $Z(t)\le c$ the hypothesis $H_0$ is accepted, and when $Z(t)>c$ it is rejected.
Below we will propose a technically simpler version of this test, without the computation of $c$, see Remark~\ref{rem:1}. The threshold $c$ can be chosen as follows.

Fix a number $\alpha\in(0,1)$, the significance level of the test. This level gives the maximal probability of a type I error, that is in our case the probability to reject the hypothesis $H_0\colon\theta\le0$ when it is true.
By Lemma~\ref{l:sup}, for a threshold $c\in(0,1)$ and $t>1$ this probability equals
\[
\sup_{\theta\le0}\P(Z(t)>c)=1-g(0,x_0,t,c).
\]
Therefore, we determine $c_t$  as a solution to the equation
\begin{equation}\label{eq:typeIerror}
g(0,x_0,t,c_t)=1-\alpha.
\end{equation}
The following result shows that for any $\alpha\in(0,1)$, it is possible to choose a sufficiently large $t$ such that $c_t\in(0,1)$.
\begin{lemma}\label{l:c_t}
Let $\alpha\in(0,1)$.
Then there exists $t_0\ge1$ such that for all $t>t_0$ there exists a unique $c_t\in(0,1)$ such that
$g(0,x_0,t,c_t)=1-\alpha$.
Moreover $c_t\to0$, as $t\to\infty$.

The constant $t_0$ can be chosen as the largest $t\ge1$ that satisfies at least one of the following two equalities
\begin{equation}\label{eq:t_0}
g(0,x_0,t,0)=1-\alpha
\quad\text{or}\quad
g(0,x_0,t,1)=1-\alpha.
\end{equation}
\end{lemma}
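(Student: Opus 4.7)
The plan is to work entirely with the explicit formula from Lemma~\ref{l:sup}, specialised at $\theta=0$, where the definition of $v(\theta,t)$ yields $v(0,t)=t^{2H}$. Thus
\[
g(0,x_0,t,c)=\Phi\!\left(\frac{e^{t^c}-x_0}{t^H}\right)+\Phi\!\left(\frac{e^{t^c}+x_0}{t^H}\right)-1.
\]
Since $e^{t^c}$ is continuous and strictly increasing in $c\in[0,1]$ for every $t>1$, and $\Phi$ is strictly increasing, the map $c\mapsto g(0,x_0,t,c)$ is continuous and strictly increasing. The intermediate value theorem then yields a unique $c_t\in(0,1)$ solving $g(0,x_0,t,c_t)=1-\alpha$ \emph{iff} $g(0,x_0,t,0)<1-\alpha<g(0,x_0,t,1)$, so the lemma reduces to showing that this double strict inequality holds for all large $t$, and to identifying its right endpoint.

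\textbf{Asymptotics and convergence $c_t\to 0$.} Next I would take $t\to\infty$ at the two boundary values of $c$. Since $e^{t^0}=e$ is constant, $g(0,x_0,t,0)=\Phi((e-x_0)/t^H)+\Phi((e+x_0)/t^H)-1\to 2\Phi(0)-1=0$. For any fixed $c\in(0,1]$ the comparison $t^c\gg H\log t$ gives $e^{t^c}/t^H\to\infty$, hence $g(0,x_0,t,c)\to 1$; in particular $g(0,x_0,t,1)\to 1$. Consequently, for every $c^*\in(0,1)$ we have $g(0,x_0,t,c^*)>1-\alpha$ for all sufficiently large $t$, and the strict monotonicity in $c$ forces $c_t<c^*$ once $c_t$ is defined. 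Since $c^*$ is arbitrary, this proves $c_t\to 0$.

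\textbf{Identifying $t_0$.} Let $E=\set{t\ge 1:g(0,x_0,t,0)\ge 1-\alpha\text{ or }g(0,x_0,t,1)\le 1-\alpha}$; this is exactly the set on which the existence criterion from the first paragraph fails. Joint continuity of $g$ in $t$ makes $E$ closed, and the limits above make $E$ bounded above, so $t_0:=\sup E$ is finite (with the convention $t_0=1$ if $E=\varnothing$). For every $t>t_0$ we have $t\notin E$, so both strict inequalities hold and $c_t\in(0,1)$ is uniquely determined. If $E\ne\varnothing$, closedness gives $t_0\in E$; combining this with the squeeze obtained by letting $t\downarrow t_0$ in the strict inequalities valid for $t>t_0$ yields $g(0,x_0,t_0,0)\le 1-\alpha\le g(0,x_0,t_0,1)$, which together with $t_0\in E$ forces at least one of the two relations in \eqref{eq:t_0} to hold with equality. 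The only delicate step is this last one---promoting the defining weak inequalities of $E$ to the stated equalities at the supremum---and it is handled by closedness of $E$ together with the right-limit squeeze; everything else follows from elementary monotonicity and the exponential-over-polynomial growth comparison.
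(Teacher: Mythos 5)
Your proof is correct, and for the two main assertions (existence and uniqueness of $c_t$ for all large $t$, and $c_t\to0$) it follows essentially the paper's own route: specialise \eqref{eq:g} at $\theta=0$ via $v(0,t)=t^{2H}$, use continuity and strict monotonicity of $g(0,x_0,t,\cdot)$, send $t\to\infty$ at $c=0$ (limit $0$) and at any fixed $c\in(0,1]$ (limit $1$), and squeeze $c_t$ below an arbitrary $\eps\in(0,1)$. Where you genuinely diverge is the identification of $t_0$. The paper observes that $g(0,x_0,1,0)=g(0,x_0,1,1)$ (at $t=1$ the map $c\mapsto e^{t^c}$ is constant) and, combining this with the limits $0$ and $1$ at infinity, concludes via the intermediate value theorem that at least one equality in \eqref{eq:t_0} is attained and the solution set is bounded; it leaves implicit the step that for $t$ beyond the largest solution both strict inequalities $g(0,x_0,t,0)<1-\alpha<g(0,x_0,t,1)$ hold, which is exactly the connectedness argument your construction makes explicit. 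You instead set $t_0=\sup E$ for the closed, bounded failure set $E$ and promote the weak inequalities at the supremum to an equality by the right-limit squeeze --- a cleaner and more rigorous treatment of the same endpoint. One small improvement: your convention $t_0=1$ when $E=\emptyset$ is never needed, since $1\in E$ always --- at $t=1$ one has $g(0,x_0,1,0)=g(0,x_0,1,1)$, and by trichotomy this common value satisfies either the first or the second weak inequality defining $E$. Noting this (it is precisely the paper's $t=1$ observation) makes your argument unconditional and also secures the lemma's claim that some $t\ge1$ satisfies one of the equalities in \eqref{eq:t_0}, so that ``the largest such $t$'' is well defined.
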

\begin{proof}
By Lemma~\ref{l:asymp_v}~$(iii)$, $v(0,t)=t^{2H}$. Then for $\theta=0$ the formula~\eqref{eq:g} becomes
\begin{equation}\label{eq:g0}
g(0,x_0,t,c)=\Phi\left(\frac{e^{t^c}-x_0}{t^H}\right)+\Phi\left(\frac{e^{t^c}+x_0}{t^H}\right)-1.
\end{equation}
For any $t>1$, the function $g(0,x_0,t,c)$
is strictly increasing with respect to $c$.
For $c=0$ we have
\[
g(0,x_0,t,0)=\Phi\left(\frac{e-x_0}{t^H}\right)+\Phi\left(\frac{e+x_0}{t^H}\right)-1
\to2\Phi(0)-1=0,\quad\text{as } t\to\infty.
\]
Therefore, there exists $t_1>1$ such that
$g(0,x_0,t,0)<1-\alpha$
for all $t\ge t_1$.

Similarly, for $c=1$
\[
g(0,x_0,t,1)
=\Phi\left(\frac{e^{t}-x_0}{t^H}\right)+\Phi\left(\frac{e^{t}+x_0}{t^H}\right)-1
\to2\Phi(\infty)-1=1,\quad\text{as }  t\to\infty.
\]
Therefore, there exists  $t_2>1$ such that $g(0,x_0,t,1)>1-\alpha$ for all $t\ge t_2$.

Thus, for any $t\ge t_0=\max\set{t_1,t_2}$
there exists a unique $c_t\in(0,1)$ such that $g(0,x_0,t,c_t)=1-\alpha$.

To prove the convergence $c_t\to0$, $t\to\infty$, consider an arbitrary $\eps\in(0,1)$.
Then
\[
g(0,x_0,t,\eps)
=\Phi\left(\frac{e^{t^\eps}-x_0}{t^H}\right)+\Phi\left(\frac{e^{t^\eps}+x_0}{t^H}\right)-1
\to2\Phi(\infty)-1=1,\quad\text{as } t\to\infty.
\]
Arguing as above, we see that there exists $t_3>1$ such that for any $t>t_3$ the unique $c_t\in(0,1)$, for which $g(0,x_0,t,c_t)=1-\alpha$, belongs to the interval $(0,\eps)$.
This implies the convergence $c_t\to0$, as $t\to\infty$.

It follows from \eqref{eq:g0} that
$g(0,x_0,t,0)=g(0,x_0,t,1)$ for $t=1$.
As $t\to\infty$, we have $g(0,x_0,t,0)\to0$, $g(0,x_0,t,1)\to1$.
Hence, at least one of the equalities \eqref{eq:t_0} is satisfied for some $t\ge1$ and the set of such $t$'s is bounded.
\end{proof}

\begin{remark}\label{rem:1}
Since the function $g(0,x_0,t,c)$ is strictly increasing with respect to $c$ for $t>1$, we see that the inequality $Z(t)\le c_t$ is equivalent to the inequality
$g(0,x_0,t,Z(t))\le g(0,x_0,t,c_t)=1-\alpha$.
Therefore, we do not need to compute the value of $c_t$.
It is sufficient to compare $g(0,x_0,t,Z(t))$ with the level $1-\alpha$.
\end{remark}

\begin{algorithm}\label{alg:1}
The hypothesis $H_0\colon\theta\le0$ against the alternative $H_1\colon\theta>0$ can be tested as follows.
\begin{enumerate}
\item
Find $t_0$ defined in Lemma~\ref{l:c_t}. The algorithm can be applied only in the case $t>t_0$.
\item
Evaluate the statistic $Z(t)$ defined by~\eqref{eq:Z}.
\item
Compute the value of $g(0,x_0,t,Z(t))$, see~\eqref{eq:g0}.
\item
Accept the hypothesis $H_0$ if $g(0,x_0,t,Z(t))\le1-\alpha$, and the hypothesis $H_1$, otherwise.
\end{enumerate}
\end{algorithm}

\begin{remark}
In fact, the condition $t>t_0$ is not too restrictive, since for reasonable values of $\alpha$, the values of $t_0$ are quite small, see Table~\ref{tab:hyp1-t0}.
\end{remark}

Let us summarize the properties of the test in the following theorem.
\begin{theorem}
The test described in Algorithm~\ref{alg:1} is unbiased and consistent, as $t\to\infty$.
For the simple alternative $\theta_1>0$ and moment $t>t_0$, the power of the test equals $1-g(\theta_1,x_0,t,c_t)$, where $c_t$ can be found from~\eqref{eq:typeIerror}.
\end{theorem}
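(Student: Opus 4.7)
The proof plan splits naturally into three pieces: the formula for the power, unbiasedness, and consistency.

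First I would compute the power for a simple alternative $\theta_1>0$. By construction of Algorithm~\ref{alg:1}, the rejection region is $\set{Z(t)>c_t}$, where $c_t$ is determined by \eqref{eq:typeIerror}. Hence the power at $\theta_1$ equals
\[
\P_{\theta_1}(Z(t)>c_t)=1-\P_{\theta_1}(Z(t)\le c_t)=1-g(\theta_1,x_0,t,c_t),
\]
using the cdf formula from Lemma~\ref{l:sup}. This is exactly the claimed expression.

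For unbiasedness, the idea is to exploit the monotonicity of $g$ in $\theta$ from Lemma~\ref{l:sup}. For any $\theta\le 0$,
\[
g(\theta,x_0,t,c_t)\ge g(0,x_0,t,c_t)=1-\alpha,
\]
so the type I error probability $1-g(\theta,x_0,t,c_t)$ is at most $\alpha$. Conversely, for any $\theta_1>0$ the strict monotonicity yields $g(\theta_1,x_0,t,c_t)<1-\alpha$, so the power exceeds $\alpha$. These two inequalities together give unbiasedness at every $t>t_0$.

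For consistency, I would fix an alternative $\theta_1>0$ and combine the a.\,s.\ convergence $Z(t)\to 1$ (from the first lemma of Section~\ref{sec:Hyp}) with the convergence $c_t\to 0$ (from Lemma~\ref{l:c_t}). For each $\omega$ in the full-measure set on which $Z(t,\omega)\to 1$, there exists $T(\omega)$ such that $Z(t,\omega)>1/2>c_t$ for all $t>T(\omega)$, so the indicator $\ind\set{Z(t)>c_t}\to 1$ a.\,s. Bounded convergence then gives $\P_{\theta_1}(Z(t)>c_t)\to 1$, which is the consistency statement.

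I do not anticipate a real obstacle here: the power formula is a direct rewriting, unbiasedness follows from monotonicity already established in Lemma~\ref{l:sup}, and consistency is the a.\,s.\ limit of $Z(t)$ under $\theta>0$ combined with the vanishing of the threshold $c_t$. The only mildly delicate point is passing from a.\,s.\ convergence of $Z(t)$ to convergence of the rejection probability, which is handled by bounded convergence applied to the indicator of the rejection region.
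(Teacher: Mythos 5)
Your proposal is correct, and the power formula and unbiasedness parts coincide with the paper's argument (the paper likewise invokes the strict monotonicity of $g$ in $\theta$ from Lemma~\ref{l:sup}; you additionally spell out the null side $g(\theta,x_0,t,c_t)\ge g(0,x_0,t,c_t)=1-\alpha$ for $\theta\le0$, which the paper leaves implicit in the definition of $c_t$ through the supremum of the type I error). Where you genuinely diverge is consistency. The paper argues analytically: using $c_t\to0$ it fixes $c\in(0,1)$ with $c_t<c$ for large $t$, bounds the power below by $1-g(\theta_1,x_0,t,c)$ via monotonicity of $g$ in $c$, and then computes the limit of this expression explicitly from the cdf formula~\eqref{eq:g} together with the asymptotics $v(\theta_1,t)\sim H\Gamma(2H)\theta_1^{-2H}e^{2\theta_1 t}$ of Lemma~\ref{l:asymp_v}~$(i)$, obtaining $2-\Phi\left(-x_0\theta_1^H/\sqrt{H\Gamma(2H)}\right)-\Phi\left(x_0\theta_1^H/\sqrt{H\Gamma(2H)}\right)=1$. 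You instead argue pathwise: $Z(t)\to1$ a.\,s.\ under $\theta_1>0$ (the first lemma of Section~\ref{sec:Hyp}, resting on Lemma~\ref{l:as_non-erg}), $c_t\to0$ deterministically, hence $\ind\set{Z(t)>c_t}\to1$ a.\,s., and bounded convergence gives $\P_{\theta_1}(Z(t)>c_t)\to1$. Both routes are valid and both lean on machinery already in the paper; yours is softer and more robust, since it needs no explicit cdf and would survive any modification of the model preserving the a.\,s.\ dichotomy of $Z(t)$ and the vanishing of the threshold, while the paper's computation is more quantitative, exhibiting the exact Gaussian limit of the power along the fixed threshold $c$ and thus giving some information on how fast the power approaches $1$.
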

\begin{proof}
It follows from the monotonicity of $g$ with respect to $\theta$ (see Lemma~\ref{l:sup}) that for any $\theta_1>0$
$$
\P(Z(t)> c_t)=1-g(\theta_1,x_0,t,c_t)>1-g(0,x_0,t,c_t)=\alpha.
$$
This means that the test is unbiased.
Evidently, for the simple alternative $\theta_1>0$ the power of the test equals $1-g(\theta_1,x_0,t,c_t)$.

It follows from the convergence $c_t\to0$, as $t\to\infty$ (see Lemma~\ref{l:c_t}), that $c_t<c$ for sufficiently large $t$ and some constant $c\in(0,1)$.
Taking into account the formula~\eqref{eq:g} and Lemma~\ref{l:asymp_v}~$(i)$, we get, as $t\to\infty$:
\begin{align*}
1&\ge 1-g(\theta_1,x_0,t,c_t)
\ge 1-g(\theta_1,x_0,t,c)
=2-\Phi\left(\frac{e^{t^c}-x_0e^{\theta_1 t}}{\sqrt{v(\theta_1,t)}}\right)-\Phi\left(\frac{e^{t^c}+x_0e^{\theta_1 t}}{\sqrt{v(\theta_1,t)}}\right)\\
&\to2-\Phi\left(-\frac{x_0\theta_1^H}{\sqrt{H\Gamma(2H)}}\right)
-\Phi\left(\frac{x_0\theta_1^H}{\sqrt{H\Gamma(2H)}}\right)
=1.
\end{align*}
Hence, the test is consistent.
\end{proof}

\subsection{Testing the hypothesis \texorpdfstring{$H_0\colon\theta\ge\theta_0$}{H0: theta>=theta0} against \texorpdfstring{$H_1\colon\theta\le0$}{H1: theta<=0}}
Fix $\theta_0\in(0,1)$.
Let us consider the problem of testing the hypothesis
$H_0\colon \theta\ge\theta_0$
against alternative
$H_1\colon \theta\le0$.

\begin{algorithm}\label{alg:2}
The hypothesis $H_0\colon\theta\ge\theta_0$ against the alternative $H_1\colon\theta\le0$ can be tested as follows.
\begin{enumerate}
\item
Find $\tilde t_0$ defined in Lemma~\ref{l:hyp2-c_t}. The algorithm can be applied only in the case $t>\tilde t_0$.
\item
Evaluate the statistic $Z(t)$ defined by~\eqref{eq:Z}.
\item
Compute the value of $g(\theta_0,x_0,t,Z(t))$, see~\eqref{eq:g}.
\item
Accept the hypothesis $H_0$ if $g(\theta_0,x_0,t,Z(t))\ge\alpha$, and the hypothesis $H_1$, otherwise.
\end{enumerate}
\end{algorithm}
 This algorithm is based on the following results. They can be proved similarly to the previous subsection.
\begin{lemma}\label{l:hyp2-c_t}
Let $\alpha\in(0,1)$.
There exists $\tilde t_0>1$ such that for all $t>\tilde t_0$ there exists a unique $\tilde c_t\in(0,1)$ such that
\begin{equation}\label{eq:typeIerror-2}
g(\theta_0,x_0,t,\tilde c_t)=\alpha.
\end{equation}
In this case $\tilde c_t\to1$, as $t\to\infty$.

The constant $\tilde t_0$ can be chosen as the largest $t>1$ that satisfies at least one of the following two equalities
\[
g(\theta_0,x_0,t,0)=\alpha
\quad\text{or}\quad
g(\theta_0,x_0,t,1)=\alpha.
\]
\end{lemma}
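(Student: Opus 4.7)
The plan is to mirror the proof of Lemma~\ref{l:c_t}, replacing the reference parameter $\theta=0$ by $\theta=\theta_0>0$ and the target level $1-\alpha$ by $\alpha$. For each fixed $t>1$ the map $c\mapsto g(\theta_0,x_0,t,c)$ is continuous and strictly increasing on $[0,1]$, because $c\mapsto e^{t^c}$ is strictly increasing and $\Phi$ is strictly increasing. So the existence of a unique $\tilde c_t\in(0,1)$ solving \eqref{eq:typeIerror-2} will follow from the intermediate value theorem as soon as I establish that $g(\theta_0,x_0,t,0)<\alpha<g(\theta_0,x_0,t,1)$ for all sufficiently large $t$.

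Next I would compute the two boundary limits as $t\to\infty$ by invoking Lemma~\ref{l:asymp_v}~$(i)$, the same asymptotic already used at the end of the preceding theorem's proof, which yields $\sqrt{v(\theta_0,t)}\sim\frac{\sqrt{H\Gamma(2H)}}{\theta_0^H}e^{\theta_0 t}$. For $c=0$, the numerator $e\mp x_0e^{\theta_0 t}$ is dominated by the $e^{\theta_0 t}$ term, so the two ratios inside $\Phi$ in \eqref{eq:g} tend to $\mp x_0\theta_0^H/\sqrt{H\Gamma(2H)}$; combining the two $\Phi$ values via $\Phi(-C)+\Phi(C)-1=0$ gives $g(\theta_0,x_0,t,0)\to 0$. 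For $c=1$, the dominant term is now $e^t$, which beats $e^{\theta_0 t}$ because $\theta_0<1$, so both arguments of $\Phi$ tend to $+\infty$ and $g(\theta_0,x_0,t,1)\to 1$. Continuity of $g$ in $t$ together with these limits shows that at least one of the two equalities $g(\theta_0,x_0,t,0)=\alpha$ or $g(\theta_0,x_0,t,1)=\alpha$ is satisfied for some $t\ge1$ and that the set of such $t$ is bounded, so $\tilde t_0$ is well defined as the largest such $t$, and for every $t>\tilde t_0$ we have the bracketing needed for existence and uniqueness of $\tilde c_t$.

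For the convergence $\tilde c_t\to 1$ I would fix an arbitrary $\eps\in(0,1)$ and verify that $g(\theta_0,x_0,t,1-\eps)\to 0$ as $t\to\infty$. This works for the same reason as the $c=0$ case: $e^{t^{1-\eps}}$ still grows only subexponentially in $t$, so it is negligible compared with $e^{\theta_0 t}$, the two ratios inside $\Phi$ again tend to $\mp x_0\theta_0^H/\sqrt{H\Gamma(2H)}$, and the $\Phi$ values cancel in the limit. Hence for $t$ large enough one has $g(\theta_0,x_0,t,1-\eps)<\alpha=g(\theta_0,x_0,t,\tilde c_t)$, which by strict monotonicity of $g$ in $c$ forces $\tilde c_t>1-\eps$.

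The main obstacle I anticipate is the asymptotic $g(\theta_0,x_0,t,1)\to 1$, since this is precisely where the hypothesis $\theta_0\in(0,1)$ is used: if $\theta_0$ were allowed to be $\ge 1$, then $e^t$ would no longer dominate $e^{\theta_0 t}$ and the above argument would collapse (indeed, in that case $\tilde c_t$ would not necessarily tend to $1$). Everything else is a direct transcription of the proof of Lemma~\ref{l:c_t}, with the roles of the two boundary limits $0$ and $1$ effectively swapped.
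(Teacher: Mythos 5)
Your proposal is correct and is exactly the route the paper intends: the paper gives no separate proof of Lemma~\ref{l:hyp2-c_t}, stating only that it ``can be proved similarly to the previous subsection,'' and your argument is the faithful transcription of the proof of Lemma~\ref{l:c_t} with $\theta_0$ in place of $0$, level $\alpha$ in place of $1-\alpha$, the boundary limits $g(\theta_0,x_0,t,0)\to0$ and $g(\theta_0,x_0,t,1)\to1$ obtained from Lemma~\ref{l:asymp_v}~$(i)$, and the $c=1-\eps$ comparison yielding $\tilde c_t\to1$. Your observation that $\theta_0\in(0,1)$ is precisely what makes $e^{t}$ dominate $e^{\theta_0 t}$ (so that $g(\theta_0,x_0,t,1)\to1$) correctly identifies where that hypothesis enters.
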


\begin{theorem}
The test described in Algorithm~\ref{alg:2} is unbiased and consistent, as $t\to\infty$.
For the simple alternative $\theta_1\le0$ and moment $t>\tilde t_0$, the power of the test equals $g(\theta_1,x_0,t,\tilde c_t)$, where $\tilde c_t$ can be found from~\eqref{eq:typeIerror-2}.
\end{theorem}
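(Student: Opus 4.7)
The plan is to mirror the argument for the previous theorem, swapping the roles of the bounds $c_t$ and $\tilde c_t$ and exploiting that $\tilde c_t\to1$ instead of $c_t\to0$. The key tools are already in hand: strict monotonicity of $g(\theta,x_0,t,c)$ in both $\theta$ (Lemma~\ref{l:sup}) and $c$ (visible from~\eqref{eq:g}), together with Lemma~\ref{l:hyp2-c_t} which produces $\tilde c_t\in(0,1)$ and $\tilde c_t\to1$.

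First I would reformulate the decision rule. By strict monotonicity of $g(\theta_0,x_0,t,\cdot)$, the event $\{g(\theta_0,x_0,t,Z(t))\ge\alpha\}$ coincides with $\{Z(t)\ge\tilde c_t\}$. Since $X_t$ is Gaussian and $\log^+$ only creates an atom at $0<\tilde c_t$, $Z(t)$ has no atom at $\tilde c_t$, so $\P(\text{reject }H_0\mid\theta)=\P(Z(t)<\tilde c_t)=g(\theta,x_0,t,\tilde c_t)$. For any $\theta\ge\theta_0$, the monotonicity in $\theta$ yields $g(\theta,x_0,t,\tilde c_t)\le g(\theta_0,x_0,t,\tilde c_t)=\alpha$, so the test has level~$\alpha$. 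For any simple alternative $\theta_1\le0<\theta_0$ the same monotonicity gives $g(\theta_1,x_0,t,\tilde c_t)\ge g(\theta_0,x_0,t,\tilde c_t)=\alpha$, proving unbiasedness, and simultaneously identifies the power as $g(\theta_1,x_0,t,\tilde c_t)$.

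For consistency I would fix $\theta_1\le0$ and use $\tilde c_t\to1$ to pick some $c\in(0,1)$ with $\tilde c_t>c$ for all $t$ large. Monotonicity in $c$ gives $g(\theta_1,x_0,t,\tilde c_t)\ge g(\theta_1,x_0,t,c)$, so it suffices to show $g(\theta_1,x_0,t,c)\to1$. Plugging into~\eqref{eq:g}, I need the two arguments $(e^{t^c}\pm x_0e^{\theta_1t})/\sqrt{v(\theta_1,t)}$ to tend to $+\infty$. Because $\theta_1\le0$, the term $x_0e^{\theta_1t}$ is bounded, while by Lemma~\ref{l:asymp_v} the variance $v(\theta_1,t)$ grows only polynomially (linear-like for $\theta_1<0$, or equal to $t^{2H}$ for $\theta_1=0$). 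The doubly exponential growth of $e^{t^c}$ thus dominates both the numerator shift and the denominator, driving both $\Phi$-values to $1$ and hence $g(\theta_1,x_0,t,c)\to1$.

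The proof presents no real obstacle: everything reduces to bookkeeping with the monotonicity of $g$, and the only analytic content is the elementary comparison $e^{t^c}\gg\text{poly}(t)$, which is much looser than what was needed in the previous theorem (where a similar estimate had to be combined with the non-ergodic asymptotics of $v(\theta_1,t)\sim H\Gamma(2H)\theta_1^{-2H}e^{2\theta_1 t}$). The mildest point worth checking carefully is that the reduction to $Z(t)\ge\tilde c_t$ really is an equivalence of events (no atom of $Z(t)$ at $\tilde c_t$), which follows because $\{Z(t)=\tilde c_t\}\subseteq\{|X_t|=e^{t^{\tilde c_t}}\}$ has probability zero.
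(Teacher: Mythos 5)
Your proposal is correct and takes essentially the same route as the paper, which omits the proof with the remark that it is ``similar to the previous subsection'': strict monotonicity of $g$ in $\theta$ (Lemma~\ref{l:sup}) yields the level, unbiasedness and the power formula, while $\tilde c_t\to1$ from Lemma~\ref{l:hyp2-c_t} together with the domination of $e^{t^c}$ over the bounded term $x_0e^{\theta_1 t}$ and over $\sqrt{v(\theta_1,t)}$ gives consistency, exactly mirroring the printed proof for Algorithm~\ref{alg:1}. Two trivial remarks: for $\theta_1<0$ the variance is not ``linear-like'' but converges to the constant $H\Gamma(2H)(-\theta_1)^{-2H}$ by Lemma~\ref{l:asymp_v}~$(ii)$, which only strengthens your domination step, and your verification that $Z(t)$ has no atom at $\tilde c_t$ (so that $\P(Z(t)<\tilde c_t)=g(\theta_1,x_0,t,\tilde c_t)$) is a careful detail the paper glosses over.
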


\begin{remark}
The values of $\tilde t_0$ for various values of $\theta_0$ and $H$ are represented in Table~\ref{tab:hyp2-t0}.
We see that if $\theta_0$ is too close to zero, then for small $H$, the condition $t>\tilde t_0$ does not hold for reasonable values of $t$.
\end{remark}

\begin{remark}
If we have a confidence interval for $\theta$, then the value of $\theta_0 \in (0, 1)$ can be chosen less than or equal to a lower confidence bound (in the case when it is positive).
\end{remark}

\section{Drift parameter estimation}\label{sec:Est}

In this section we propose drift parameter estimators that work for any $H\in(0,1)$. We consider continuous and discrete observations.

\subsection{Continuous case}
Assume that a trajectory of $X=X(t)$ is observed over a finite time interval $[0,T]$.
\begin{theorem}\label{th:cont}
Let $H\in(0,1)$.
\begin{itemize}
\item[$(i)$]
For $\theta<0$, the estimator
\[
\hat\theta_T^{(1)}
=-\left(\frac{1}{H\Gamma(2H)T}\int_0^TX_t^2dt\right)^{-\frac{1}{2H}}
\]
is strongly consistent, as $T\to\infty$.
\item[$(ii)$]
For $\theta>0$, the estimator
\[
\hat\theta_T^{(2)}
=\frac{X_T^2}{2\int_0^TX_t^2dt}
\]
is strongly consistent, as $T\to\infty$.
\end{itemize}
\end{theorem}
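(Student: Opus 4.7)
My plan is to handle the two parts separately, since they exploit opposite asymptotic regimes of the process $X$.

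\emph{Part (ii), the non-ergodic case $\theta>0$.} The key input is the a.s.\ limit $e^{-\theta t}X_t\to\xi_\theta$ given by Lemma~\ref{l:as_non-erg}, where $\xi_\theta$ is a nondegenerate Gaussian random variable, so in particular $0<\xi_\theta^2<\infty$ a.s. Rewrite
\[
\hat\theta_T^{(2)}=\frac{\bigl(e^{-\theta T}X_T\bigr)^2}{2e^{-2\theta T}\int_0^T X_t^2\,dt};
\]
the numerator converges to $\xi_\theta^2$ a.s. For the denominator I substitute $X_t^2=e^{2\theta t}(e^{-\theta t}X_t)^2$ and apply the integral L'H\^opital (Stolz--Ces\`aro) rule: since $\int_0^T e^{2\theta t}\,dt=(e^{2\theta T}-1)/(2\theta)\to\infty$ and $(e^{-\theta t}X_t)^2\to\xi_\theta^2$ a.s., the ratio $\bigl(\int_0^T e^{2\theta t}\,dt\bigr)^{-1}\!\int_0^T X_t^2\,dt$ converges a.s.\ to $\xi_\theta^2$. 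Hence $2\theta e^{-2\theta T}\int_0^T X_t^2\,dt\to\xi_\theta^2$ and dividing gives $\hat\theta_T^{(2)}\to\theta$ a.s.

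\emph{Part (i), the ergodic case $\theta<0$.} Because $y\mapsto -\bigl(y/(H\Gamma(2H))\bigr)^{-1/(2H)}$ is continuous and strictly monotone on $(0,\infty)$, it suffices to prove
\[
A_T:=\frac{1}{T}\int_0^T X_t^2\,dt \xrightarrow[T\to\infty]{\text{a.s.}} H\Gamma(2H)(-\theta)^{-2H}.
\]
The route I would take is to introduce the stationary fractional Ornstein--Uhlenbeck process
\[
\tilde X_t=\theta e^{\theta t}\int_{-\infty}^t e^{-\theta s}B^H_s\,ds+B^H_t,\quad t\in\R,
\]
a centred stationary Gaussian process with $\E\tilde X_0^2=H\Gamma(2H)(-\theta)^{-2H}$ (consistent with Lemma~\ref{l:asymp_v}). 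Then $X_t-\tilde X_t=e^{\theta t}\eta$ for an a.s.\ finite random variable $\eta$, so the difference is exponentially small a.s.; Cauchy--Schwarz together with the linear-in-$T$ growth of $\int_0^T\tilde X_t^2\,dt$ yields $\frac{1}{T}\int_0^T(X_t^2-\tilde X_t^2)\,dt\to 0$ a.s. Since $\tilde X$ is a stationary Gaussian process with absolutely continuous spectral measure, Maruyama's criterion gives its ergodicity, and Birkhoff's ergodic theorem delivers $T^{-1}\!\int_0^T\tilde X_t^2\,dt\to \E\tilde X_0^2$ a.s., closing the argument.

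\emph{Main obstacle.} The delicate step is part~(i) for $H<1/2$, where the pathwise construction of $\tilde X$ via $\int_{-\infty}^t e^{-\theta s}\,dB^H_s$ falls outside standard stochastic integration. An alternative that works uniformly in $H\in(0,1)$ is to stay on $[0,\infty)$ and work directly with $X$: Lemma~\ref{l:asymp_v} is expected to give $\E X_t^2\to H\Gamma(2H)(-\theta)^{-2H}$, so $\E A_T$ has the correct limit; one then bounds $\var(A_T)$ via the covariance of the centred Gaussian process $X$, obtains polynomial decay in $T$, and passes to an a.s.\ statement by Borel--Cantelli along a geometric subsequence $T_n=\rho^n$, interpolating between consecutive $T_n$'s by monotonicity of $t\mapsto\int_0^t X_s^2\,ds$. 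The crux is producing the requisite covariance decay uniformly in $H$, since $B^H$ has short-range ($H<1/2$) versus long-range ($H>1/2$) covariance behaviour.
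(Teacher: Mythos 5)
Your proposal is correct and essentially reproduces the paper's argument: part~(ii) is the paper's proof verbatim (your Stolz--Ces\`aro step is just a rigorous reading of the paper's L'H\^opital application to $\int_0^T X_t^2\,dt$ and $e^{2\theta T}$), and part~(i) is exactly the paper's Lemma~\ref{l:as_erg} --- comparison of $X$ with the stationary fractional Ornstein--Uhlenbeck process $Y_t=X_t+e^{\theta t}\eta_\theta$, Birkhoff's theorem, and the moment computation via Lemmas~\ref{l:distr} and~\ref{l:asymp_v} --- with Maruyama's spectral criterion substituted for the paper's citation of~\cite{CKM} for ergodicity. Your ``main obstacle'' is illusory: the paper (following Prop.~A.1 of~\cite{CKM}) defines the stationary process pathwise by precisely your Lebesgue-integral formula $\tilde X_t=\theta e^{\theta t}\int_{-\infty}^t e^{-\theta s}B^H_s\,ds+B^H_t$, obtained from $\int_{-\infty}^t e^{\theta(t-s)}\,dB^H_s$ by integration by parts, and the stationarity and ergodicity results of~\cite{CKM} hold for all $H\in(0,1)$, so the variance/Borel--Cantelli fallback (while workable) is unnecessary even for $H<1/2$.
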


\begin{proof}
$(i)$
For $\theta<0$ the result follows from Lemma~\ref{l:as_erg}.

$(ii)$ If $\theta>0$, then Lemma~\ref{l:as_non-erg} implies the a.\,s.\ convergence
\begin{equation}\label{eq:th1-1}
\frac{X_T^2}{e^{2\theta T}}\to\xi_\theta^2,
\quad\text{as }T\to\infty.
\end{equation}
Therefore, by L'H\^opital's rule,
\begin{equation}\label{eq:th1-2}
\lim_{T\to\infty}\frac{\int_0^TX_t^2\,dt}{e^{2\theta T}}
=\lim_{T\to\infty}\frac{X_T^2}{2\theta e^{2\theta T}}
=\frac{\xi_\theta^2}{2\theta}.
\end{equation}
Note that $0<\xi_\theta^2<\infty$  with probability 1, since  $\xi_\theta$ is a normal random variable.
Combining~\eqref{eq:th1-1} and ~\eqref{eq:th1-2}, we get the convergence $\hat\theta_T^{(2)}\to\theta$ a.\,s., as $T\to\infty$.
\end{proof}

\begin{remark}
In the case $H\in[1/2,1)$ the strong consistency of the estimators $\hat\theta_T^{(1)}$, $\hat\theta_T^{(2)}$
was proved in~\cite{HN} and \cite{BESO}, respectively.
\end{remark}

\subsection{Discrete case}
Assume that a trajectory of $X=X(t)$ is observed at the points
$t_{k,n}=\frac{k}{n}$, $0\le k\le n^m$, $n\ge1$, where $m>1$ is some fixed natural number.
\begin{theorem}
Let $H\in(0,1)$, $m>1$.
\begin{itemize}
\item[$(i)$]
For $\theta<0$, the estimator
\[
\hat\theta_n^{(3)}(m)=-\left(\frac{1}{H\Gamma(2H)n^m}\sum_{k=0}^{n^m-1}X_{k/n}^2\right)^{-\frac1{2H}}
\]
is strongly consistent, as $n\to\infty$.
\item[$(ii)$]
For $\theta>0$, the estimator
\[
\hat\theta_n^{(4)}(m)=\frac{nX_{n^{m-1}}^2}{2\sum_{k=0}^{n^m-1}X_{k/n}^2}
\]
is strongly consistent, as $n\to\infty$.
\end{itemize}
\end{theorem}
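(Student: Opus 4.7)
The plan is to reduce both parts to the continuous results in Theorem~\ref{th:cont} applied at $T=n^{m-1}$, by showing that the left-endpoint Riemann sum $\frac1n\sum_{k=0}^{n^m-1}X_{k/n}^2$ approximates $\int_0^{n^{m-1}}X_t^2\,dt$ sufficiently well. Set
\[
R_n:=\frac{1}{n}\sum_{k=0}^{n^m-1}X_{k/n}^2-\int_0^{n^{m-1}}X_t^2\,dt
=\sum_{k=0}^{n^m-1}\int_{k/n}^{(k+1)/n}\bigl(X_{k/n}^2-X_t^2\bigr)\,dt.
\]
Both $\hat\theta_n^{(3)}(m)$ and $\hat\theta_n^{(4)}(m)$ coincide with $\hat\theta_{T}^{(1)}$ and $\hat\theta_{T}^{(2)}$ at $T=n^{m-1}$ after replacing $\int_0^{n^{m-1}}X_t^2\,dt$ by $\int_0^{n^{m-1}}X_t^2\,dt+R_n$. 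Hence, by Theorem~\ref{th:cont}, it suffices to prove $R_n=o(n^{m-1})$ a.s.\ in case~$(i)$ and $R_n=o\bigl(\int_0^{n^{m-1}}X_t^2\,dt\bigr)$ a.s.\ in case~$(ii)$.

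To bound $|R_n|$ pointwise, I would use the Langevin equation~\eqref{eq:sde} to write, for $t\in[k/n,(k+1)/n]$,
\[
\abs{X_t-X_{k/n}}\le\frac{|\theta|}{n}\sup_{s\in[k/n,(k+1)/n]}\abs{X_s}+\abs{B^H_t-B^H_{k/n}},
\]
and rearrange to get $\sup_{s\in[k/n,(k+1)/n]}\abs{X_s}\le 2(\abs{X_{k/n}}+\Delta_n)$ for large $n$, where $\Delta_n:=\max_{0\le k<n^m}\sup_{t\in[k/n,(k+1)/n]}\abs{B^H_t-B^H_{k/n}}$. Combining with $X_t^2-X_{k/n}^2=(X_t-X_{k/n})(X_t+X_{k/n})$ and integrating produces
\[
|R_n|\le C\Bigl(\frac{1}{n^2}\sum_{k=0}^{n^m-1}X_{k/n}^2+\Delta_n\,\frac{1}{n}\sum_{k=0}^{n^m-1}\abs{X_{k/n}}+\Delta_n^2\,n^{m-1}\Bigr).
\]
The fBm increments $\Delta_n$ are controlled uniformly by self-similarity and stationarity of increments: each $\sup_{t\in[k/n,(k+1)/n]}\abs{B^H_t-B^H_{k/n}}$ has the law of $n^{-H}\sup_{s\in[0,1]}\abs{B^H_s}$, so a union bound and Markov's inequality of order $p$ give $\P(\Delta_n>n^{-H+\eps})\le C_p\,n^{m-p\eps}$, summable in $n$ for $p$ large. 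Borel--Cantelli then yields $\Delta_n\le n^{-H+\eps}$ a.s.\ for all large $n$, for any $\eps\in(0,H)$.

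For case~$(i)$, Lemma~\ref{l:as_erg} gives $\int_0^{n^{m-1}}X_t^2\,dt=O(n^{m-1})$ a.s.; bootstrapping the Lipschitz-type estimate above (absorbing the $\tfrac{C}{n^2}\sum X_{k/n}^2$ into the left-hand side) yields $\tfrac{1}{n}\sum X_{k/n}^2=O(n^{m-1})$ and, by Cauchy--Schwarz, $\tfrac{1}{n}\sum\abs{X_{k/n}}=O(n^{m-1})$ a.s. Dividing the bound on $|R_n|$ by $n^{m-1}$ then gives $|R_n|/n^{m-1}=O(1/n)+O(\Delta_n)+O(\Delta_n^2)\to 0$ a.s. For case~$(ii)$, Lemma~\ref{l:as_non-erg} gives $X_te^{-\theta t}\to\xi_\theta$ with $0<|\xi_\theta|<\infty$ a.s., whence $\tfrac{1}{n}\sum X_{k/n}^2\sim \xi_\theta^2 e^{2\theta n^{m-1}}/(2\theta)$ and $\tfrac{1}{n}\sum\abs{X_{k/n}}\sim\abs{\xi_\theta}e^{\theta n^{m-1}}/\theta$, while by~\eqref{eq:th1-2} $\int_0^{n^{m-1}}X_t^2\,dt\sim\xi_\theta^2 e^{2\theta n^{m-1}}/(2\theta)$; dividing the three terms in the bound on $|R_n|$ by this integral yields $O(1/n)$, $O(\Delta_n e^{-\theta n^{m-1}})$, $O(\Delta_n^2 n^{m-1} e^{-2\theta n^{m-1}})$ respectively, all vanishing almost surely.

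The main technical obstacle is the simultaneous uniform control of $n^m$ short fBm increments while tracking the unbounded growth of $X_t$ on $[0,n^{m-1}]$ (polynomial in case~$(i)$, exponential in case~$(ii)$); the Borel--Cantelli argument calibrated to the rate $n^{-H+\eps}$ is the key technical ingredient.
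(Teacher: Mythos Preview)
Your argument is correct and reaches the same conclusions, but the route differs from the paper's in both parts. For $(i)$, the paper works in $L^p$: writing $\zeta_n=-R_n/n^{m-1}$, it uses the moment bounds of Lemma~\ref{l:moments} (which exploit that $\theta<0$ makes the process essentially stationary) to obtain $\E|\zeta_n|^p\le K(p)n^{-pH}$, and then converts this to an almost-sure bound via a Garsia-type result \cite[Lemma~2.1]{Kloeden07}. Your approach is entirely pathwise: you control the mesh oscillation $\Delta_n$ of $B^H$ by a union bound plus Borel--Cantelli, and then bootstrap through the Langevin identity and the known continuous ergodic limit. The payoff of your method is that it needs no stationarity-based moment estimates and that the same machinery handles both cases; the paper's moment argument is slightly cleaner in that it avoids the self-referential step (note, by the way, that your bootstrap for $S_n:=\tfrac1n\sum X_{k/n}^2$ must also absorb the $\Delta_n\cdot\tfrac1n\sum|X_{k/n}|$ term before $S_n=O(n^{m-1})$ can be concluded---either via $|x|\le \tfrac12(1+x^2)$ or via Cauchy--Schwarz followed by a quadratic inequality in $\sqrt{S_n}$). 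For $(ii)$, the paper simply invokes \cite[Cor.~5.2(i)]{kumirase} to get $R_n=o(e^{2\theta n^{m-1}})$ a.s., whereas your argument is self-contained and recovers this from the same pathwise estimates together with the discrete Ces\`aro-type asymptotics implied by $e^{-\theta t}X_t\to\xi_\theta$; this is a genuine expository gain.
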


\begin{proof}
$(i)$
Taking into account Theorem~\ref{th:cont}~$(i)$,
it suffices to prove the convergence
\begin{equation}\label{eq:discr1}
\zeta_n:=\frac{1}{n^{m-1}}\int_0^{n^{m-1}}X^2_t\,dt-\frac{1}{n^m}\sum_{k=0}^{n^m-1}X_{k/n}^2
\to0\quad\text{a.\,s., as }n\to\infty.
\end{equation}
Denote
\[
Z_n(t)=\sum_{k=0}^{n^m-1}\left(X^2_t-X_{k/n}^2\right)\ind_{\left[\frac kn,\frac {k+1}n\right)}(t).
\]
Then
\[
\zeta_n=\frac{1}{n^{m-1}}\int_0^{n^{m-1}}Z_n(t)\,dt.
\]
Using Lemma~\ref{l:moments}, one can show that
\[
\E\abs{Z_n(t)}^p\le K(p) n^{-pH}
\]
for some constant $K(p)>0$.
Then by H\"older's inequality,
\[
\E\abs{\zeta_n}^p\le K(p) n^{-pH}.
\]
Therefore, by~\cite[Lemma~2.1]{Kloeden07} for all $\eps>0$, there exists a random variable $\eta_\eps$ such that
\[
\abs{\zeta_n}\le \eta_\eps n^{-H+\eps}
\quad\text{a.\,s.}
\]
for all $n\in\N$.
Moreover, $\E\abs{\eta_\eps}^p<\infty$
for all $p\ge1$.
This implies the convergence $\zeta_n\to0$
a.\,s., as $n\to\infty$.

$(ii)$
It follows from~\cite[Cor.~5.2(i)]{kumirase} that for $\theta>0$,
\[
\frac1n\sum_{k=0}^{n^m-1}X_{k/n}^2=\int_0^{n^{m-1}}X^2_t\,dt+\vartheta_n,
\]
where
\[
\frac{\vartheta_n}{e^{2\theta n^{m-1}}}\to0 \quad\text{a.\,s., as } n\to\infty.
\]
Combining this with Theorem~\ref{th:cont}~$(ii)$ and \eqref{eq:th1-1}, we get
\[
\hat\theta_n^{(4)}(m)=\frac{X_{n^{m-1}}^2}{2\int_0^{n^{m-1}}X^2_t\,dt+2\vartheta_n}
=\left(\frac{1}{\hat\theta_{n^{m-1}}^{(2)}}+2\cdot\frac{e^{2\theta n^{m-1}}}{X_{n^{m-1}}^2}\cdot\frac{\vartheta_n}{e^{2\theta n^{m-1}}}\right)^{-1}
\to\theta
\]
a.\,s., as $n\to\infty$.
\end{proof}

\allowdisplaybreaks
\section{Numerical illustrations}\label{sec:Num}

In this section we illustrate the performance of our algorithms and estimators by simulation experiments. We choose the initial value $x_0=1$ for all simulations.

In Tables \ref{tab:hyp1-t0}--\ref{tab:hyp2-t0} the values of $t_0$ and $\tilde t_0$ for various $H$ and $\theta_0$ are given.

\begin{table}
\caption{\label{tab:hyp1-t0} Value of $t_0$ for various $H$ and $\alpha$}\footnotesize
\centering
\begin{tabular}{*{10}{l}}\toprule
$H$ & $0.1$ & $0.2$ & $0.3$  & $0.4$ & $0.5$ & $0.6$ & $0.7$ & $0.8$ & $0.9$\\
\midrule
$\alpha=0.01$ & $1.2157$ & $1.2313$ & $1.2492$ & $1.2699$ & $1.2940$ & $1.3224$ & $1.3561$  & $1.3968$ & $1.4462$
\\
$\alpha=0.05$ & $1.5310$ & $1.2373$ & $1.1526$ & $1.1124$ & $1.0889$ & $1.0736$ & $1.0627$  & $1.0547$ & $1.0485$
\\\bottomrule
\end{tabular}
\end{table}

\begin{table}
\caption{\label{tab:hyp2-t0} Values of $\tilde t_0$ for various $H$ and $\theta_0$ ($x_0=1$)}
\footnotesize
\centering
\begin{tabular}{*{10}{l}}
\toprule
$H$ & $0.1$ & $0.2$ & $0.3$  & $0.4$ & $0.5$ & $0.6$ & $0.7$ & $0.8$ & $0.9$\\
\midrule
$\theta_0=0.1$ & $32.43$ & $32.67$ & $31.99$ & $30.59$ & $28.66$ & $26.38$ & $23.90$  & $21.39$ & $18.95$
\\
$\theta_0=0.05$ & $65.24$ & $64.72$ & $61.73$ & $57.08$ & $51.41$ & $45.23$ & $38.97$  & $33.00$ & $27.62$
\\
$\theta_0=0.01$ & $326.47$ & $307.43$ & $271.64$ & $227.99$ & $181.64$ & $137.06$ & $98.76$  & $69.62$ & $49.41$
\\
$\theta_0=0.001$ & $3193.6$ & $2719.1$ & $2073.5$ & $1387.8$ & $778.9$ & $382.1$ & $189.7$  & $104.1$ & $63.6$
\\
$\theta_0=0$ & $2.34\cdot10^{16}$ & $1.53\cdot10^{8}$ & $285\,900$ & $12\,364.1$ & $1878.1$ & $534.7$ & $218.0$  & $111.2$ & $65.9$
\\\bottomrule
\end{tabular}
\end{table}

We simulate fractional Brownian motion at the points $t=0,h,2h,3h,\dots$ and compute the approximate values of the Ornstein--Uhlenbeck process as the solution to the equation~\eqref{eq:sde}, using Euler's approximations.
For various values of  $\theta$ we simulate $n=1000$ sample path with the step $h=1/10000$.
Then we apply our algorithms, choosing the significance level $\alpha=0.05$.
In Table~\ref{tab:hyp1} the empirical rejection probabilities of the test of Algorithm~\ref{alg:1} for the hypothesis testing $H_0\colon\theta\le0$ against the alternative  $H_1\colon\theta>0$ for $H=0.3$ and $H=0.7$ are reported.

\begin{table}
\caption{\label{tab:hyp1}Empirical rejection probabilities of the test of Algorithm~\ref{alg:1}\\
    for the hypothesis testing $H_0\colon\theta\le0$ against the alternative  $H_1\colon\theta>0$\\
   for $H=0.3$ and $H=0.7$}
\footnotesize
\centering
\begin{tabular}{*{10}{l}}\toprule
$\theta$ & $-0.1$  & $-0.05$ & $0$ & $0.05$ & $0.1$  & $0.15$ & $0.2$ & $0.25$ & $0.3$\\
\midrule
\multicolumn{10}{c}{$\bm{H=0.3}$}\\
$t=20$ & $0.000$ & $0.003$ & $0.043$ & $0.341$ & $0.701$ & $0.880$ & $0.973$ & $0.982$ & $0.996$
\\
$t=40$ & $0.000$ & $0.000$ & $0.043$ & $0.675$ & $0.952$ & $0.995$ & $0.999$ & $1.000$ & $1.000$
\\
$t=60$ & $0.000$ & $0.000$ & $0.039$ & $0.860$ & $0.994$ & $1.000$ & $1.000$ & $1.000$ & $1.000$
\\
$t=80$ & $0.000$ & $0.000$ & $0.048$ & $0.940$ & $1.000$ & $1.000$ & $1.000$ & $1.000$ & $1.000$
\\
$t=100$ & $0.000$ & $0.000$ & $0.049$ & $0.986$ & $1.000$ & $1.000$ & $1.000$ & $1.000$ & $1.000$
\\
\addlinespace
\multicolumn{10}{c}{$\bm{H=0.7}$}\\
$t=20$ & $0.000$ & $0.001$ & $0.058$ & $0.284$ & $0.540$ & $0.800$ & $0.910$ & $0.967$ & $0.979$
\\
$t=40$ & $0.000$ & $0.000$ & $0.050$ & $0.581$ & $0.889$ & $0.984$ & $0.998$ & $1.000$ & $1.000$
\\
$t=60$ & $0.000$ & $0.000$ & $0.042$ & $0.782$ & $0.980$ & $1.000$ & $0.999$ & $1.000$ & $1.000$
\\
$t=80$ & $0.000$ & $0.000$ & $0.047$ & $0.908$ & $0.995$ & $1.000$ & $1.000$ & $1.000$ & $1.000$
\\
$t=100$ & $0.000$ & $0.000$ & $0.048$ & $0.959$ & $1.000$ & $1.000$ & $1.000$ & $1.000$ & $1.000$
\\\bottomrule
\end{tabular}
\end{table}

Then we test the same hypothesis with the help of the test of Moers~\cite{Moers12} for $H=0.7$.
By Monte Carlo simulations for 20\,000 sample paths of $\set{B^H_t,t\in[0,1]}$ we estimate the $(1-\alpha)$-quantile $\psi_{1-\alpha}$ of the distribution~\eqref{eq:distr-Moers} for $\alpha=0.05$. Then we compare the statistic $t\tilde\theta_{t,H}$ (see~\eqref{eq:est-Moers}) with the value of this quantile and reject the hypothesis $H_0\colon\theta\le0$ if $t\tilde\theta_{t,H}>\psi_{1-\alpha}$.
We obtained that $\psi_{0.95}\approx0.827946$.
The empirical rejection probabilities for this test are given in Table~\ref{tab:hyp1-Moers}.
We see that comparing to our algorithm, the test of Moers has bigger power, i.\,e., it works a bit better when the alternative is true. But for $\theta=0$, the necessary significance level $\alpha=0.05$ is not achieved.

\begin{table}
\caption{\label{tab:hyp1-Moers}Empirical rejection probabilities of the test of Moers~\cite{Moers12}\\ for the hypothesis testing $H_0\colon\theta\le0$ against the alternative  $H_1\colon\theta>0$ for $H=0.7$}
\footnotesize
\centering
\begin{tabular}{*{10}{l}}\toprule
$\theta$ & $-0.1$  & $-0.05$ & $0$ & $0.05$ & $0.1$  & $0.15$ & $0.2$ & $0.25$ & $0.3$\\
\midrule
$t=20$    & $0.001$ & $0.013$ & $0.085$ & $0.370$ & $0.706$ & $0.873$ & $0.947$ & $0.976$ & $0.992$\\
$t=40$    & $0.000$ & $0.004$ & $0.095$ & $0.682$ & $0.948$ & $0.993$ & $0.999$ & $1.000$ & $1.000$\\
$t=60$    & $0.000$ & $0.002$ & $0.092$ & $0.881$ & $0.995$ & $1.000$ & $1.000$ & $1.000$ & $1.000$\\
$t=80$    & $0.000$ & $0.000$ & $0.105$ & $0.948$ & $0.999$ & $1.000$ & $1.000$ & $1.000$ & $1.000$\\
$t=100$  & $0.000$ & $0.000$ & $0.089$ & $0.977$ & $1.000$ & $1.000$ & $1.000$ & $1.000$ & $1.000$\\
\bottomrule
\end{tabular}
\end{table}

Tables~\ref{tab:hyp2-0.1} and \ref{tab:hyp2-0.05} represent empirical rejection probabilities of the test of Algorithm~\ref{alg:2} for $\theta_0=0.1$ and $\theta_0=0.05$, respectively.

\begin{table}
\caption{\label{tab:hyp2-0.1}Empirical rejection probabilities of the test of Algorithm~\ref{alg:2}\\ for the hypothesis testing $H_0\colon\theta\ge\theta_0$ against the alternative  $H_1\colon\theta\le0$\\ for $\theta_0=0.1$, $H=0.3$ and $H=0.7$}
\footnotesize
\centering
\begin{tabular}{*{9}{l}}\toprule
$\theta$ & $-0.4$ & $-0.3$ & $-0.2$  & $-0.1$ & $0$ & $0.1$ & $0.2$ & $0.3$\\
\midrule
\multicolumn{9}{c}{$\bm{H=0.3}$}\\
$t=32$  & $0.999$   & $0.996$   & $0.985$   & $0.961$   & $0.626$   & $0.052$   & $0.001$   & $0.000$
\\
$t=33$  & $0.999$   & $0.999$   & $0.997$   & $0.978$   & $0.667$   & $0.046$   & $0.000$   & $0.000$
\\
$t=34$  & $0.999$   & $0.998$   & $0.994$   & $0.989$   & $0.721$   & $0.042$   & $0.001$   & $0.000$
\\
$t=35$  & $1.000$   & $1.000$   & $0.999$   & $0.992$   & $0.764$   & $0.047$   &  $0.000$  & $0.000$
\\
$t=36$  & $1.000$   & $1.000$   & $1.000$   & $1.000$   & $0.805$   & $0.048$   &  $0.000$  & $0.000$
\\
\addlinespace
\multicolumn{9}{c}{$\bm{H=0.7}$}\\
$t=25$ & $0.954$ & $0.936$ & $0.798$ & $0.598$ & $0.255$ & $0.030$ & $0.003$ & $0.000$
\\
$t=30$ & $0.998$ & $0.994$ & $0.959$ & $0.817$ & $0.363$ & $0.039$ & $0.003$ & $0.000$
\\
$t=35$ & $1.000$ & $1.000$ & $1.000$ & $0.969$ & $0.513$ & $0.043$ & $0.003$ & $0.000$
\\
$t=40$ & $1.000$ & $1.000$ & $1.000$ & $1.000$ & $0.732$ & $0.047$ & $0.001$ & $0.000$
\\
$t=45$ & $1.000$ & $1.000$ & $1.000$ & $1.000$ & $0.906$ & $0.046$ & $0.000$ & $0.000$
\\\bottomrule
\end{tabular}
\end{table}

\begin{table}
\caption{\label{tab:hyp2-0.05}Empirical rejection probabilities of the test of Algorithm~\ref{alg:2}\\ for the hypothesis testing $H_0\colon\theta\ge\theta_0$ against the alternative  $H_1\colon\theta\le0$\\ for $\theta_0=0.05$, $H=0.7$}
\footnotesize
\centering
\begin{tabular}{*{12}{l}}\toprule
$\theta$ & $-0.35$  & $-0.3$  & $-0.25$ & $-0.2$ & $-0.15$ & $-0.1$  & $-0.05$ & $0$ & $0.05$ & $0.1$ & $0.15$\\
\midrule
$t=40$ & $0.904$ & $0.893$ & $0.842$ & $0.773$ & $0.661$ & $0.566$ & $0.368$ & $0.149$ & $0.051$ &  $0.007$ &  $0.000$
\\
$t=50$ & $0.999$ & $0.991$ & $0.978$ & $0.948$ & $0.901$ & $0.793$ & $0.555$ & $0.209$ & $0.053$ & $0.003$ &  $0.000$
\\
$t=60$ & $1.000$  & $1.000$ & $0.999$ & $1.000$ & $0.990$ & $0.955$ & $0.799$ & $0.346$ & $0.047$ &  $0.002$ &  $0.000$
\\
$t=70$ & $1.000$  & $1.000$ & $1.000$  & $1.000$ & $1.000$ & $0.999$ & $0.964$ & $0.504$ & $0.045$ &  $0.001$ &  $0.000$
\\
$t=80$ & $1.000$ & $1.000$ & $1.000$  & $1.000$ & $1.000$ & $1.000$ & $0.999$ & $0.719$ & $0.044$ &  $0.001$ &  $0.000$
\\\bottomrule
\end{tabular}
\end{table}

In Tables~\ref{tab:est1}-\ref{tab:est2} the quality of estimators $\hat\theta_n^{(3)}(2)$ and $\hat\theta_n^{(4)}(2)$ is studied for $\theta=-1$ and  $\theta=1$, respectively. In this case we again choose $x_0=1$ and simulate $n=100$ trajectories of the Ornstein--Uhlenbeck process with the step $h=1/2000$. We see that both estimators converge to the true value of the parameter. Note that in the ergodic case, the best results are obtained for $H=0.5$. In the non-ergodic case, the behavior of the estimator does not depend on $H$ substantially and the standard deviation is close to zero.

\begin{table}
\caption{The estimator $\hat\theta_n^{(3)}(m)$
for $\theta=-1$, $m=2$}\label{tab:est1}
\footnotesize
\centering
\begin{tabular}{*{8}{l}}\toprule
\multicolumn{2}{r}{$n$}  & $10$ & $50$ & $100$ & $200$ & $500$ & $1000$\\
\midrule
\multirow{2}{*}{$H=0.1$} & Mean  & $-0.7417$ & $-0.8550$ & $-0.9308$ & $-0.9619$ & $-0.9805$ & $-0.9878$\\
& Std.\,dev.  & $0.83493$ & $0.25492$ & $0.19887$ & $0.14314$ & $0.08914$ & $0.06096$
\\\addlinespace
\multirow{2}{*}{$H=0.3$} & Mean  & $-0.9434$ & $-0.9940$ & $-0.9875$ & $-0.9913$ & $-0.9856$ & $-0.9941$\\
& Std.\,dev.  & $0.46723$ & $0.19727$ & $0.14965$ & $0.11490$ & $0.06408$ & $0.04367$
\\\addlinespace
\multirow{2}{*}{$H=0.5$} & Mean  & $-1.1299$ & $-1.0288$ & $-1.0168$ & $-1.0118$ & $-0.9990$ & $-0.9980$\\
& Std.\,dev.  & $0.50298$ & $0.23068$ & $0.15288$ & $0.10412$ & $0.06861$ & $0.04729$
\\\addlinespace
\multirow{2}{*}{$H=0.7$} & Mean  & $-1.2482$ & $-1.0634$ & $-1.0309$ & $-1.0096$ & $-0.9954$ & $-0.9963$\\
& Std.\,dev.  & $0.54527$ & $0.22771$ & $0.16644$ & $0.11165$ & $0.07714$ & $0.05332$
\\\addlinespace
\multirow{2}{*}{$H=0.9$} & Mean  & $-1.4098$ & $-1.2191$ & $-1.1654$ & $-1.1007$ & $-1.0701$ & $-1.0621$\\
& Std.\,dev.  & $0.54264$ & $0.39257$ & $0.32444$ & $0.27009$ & $0.23992$ & $0.19265$
\\\bottomrule
\end{tabular}
\end{table}

\begin{table}[h!]
\caption{The estimator $\hat\theta_n^{(4)}(m)$
for $\theta=1$, $m=2$}\label{tab:est2}
\footnotesize
\centering
\begin{tabular}{*{8}{l}}\toprule
\multicolumn{2}{r}{$n$}  & $10$ & $50$ & $100$ & $200$ & $500$ & $1000$\\
\midrule
\multirow{2}{*}{$H=0.1$} & Mean  & $1.10671$ & $1.02001$ & $1.00981$ & $1.00476$ & $1.00175$ & $1.00075$\\
& Std.\,dev.  & $1.7356\cdot10^{-4}$ & $2.3711\cdot10^{-15}$ & $2.0606\cdot10^{-15}$ & $2.4239\cdot10^{-15}$ & $0.\cdot10^{-15}$ & $0.\cdot10^{-16}$
\\\addlinespace
\multirow{2}{*}{$H=0.3$} & Mean  & $1.10673$ & $1.02001$ & $1.00981$ & $1.00476$ & $1.00175$ & $1.00075$\\
& Std.\,dev.  & $1.8528\cdot10^{-4}$ & $2.3656\cdot10^{-15}$ & $2.2042\cdot10^{-15}$ & $2.6221\cdot10^{-15}$ & $0.\cdot10^{-15}$ & $0.\cdot10^{-16}$
\\\addlinespace
\multirow{2}{*}{$H=0.5$} & Mean  & $1.10671$ & $1.02001$ & $1.00981$ & $1.00476$ & $1.00175$ & $1.00075$\\
& Std.\,dev.  & $3.5147\cdot10^{-4}$ & $2.5233\cdot10^{-15}$ & $2.3291\cdot10^{-15}$ & $2.4917\cdot10^{-15}$ & $0.\cdot10^{-15}$ & $0.\cdot10^{-16}$
\\\addlinespace
\multirow{2}{*}{$H=0.7$} & Mean  & $1.10665$ & $1.02001$ & $1.00981$ & $1.00476$ & $1.00175$ & $1.00075$\\
& Std.\,dev.  & $1.4344\cdot10^{-3}$ & $2.3801\cdot10^{-15}$ & $2.0968\cdot10^{-15}$ & $2.1345\cdot10^{-15}$ & $0.\cdot10^{-15}$ & $0.\cdot10^{-16}$
\\\addlinespace
\multirow{2}{*}{$H=0.9$} & Mean  & $1.10633$ & $1.02001$ & $1.00981$ & $1.00476$ & $1.00175$ & $1.00075$\\
& Std.\,dev.  & $4.1894\cdot10^{-3}$ & $2.2577\cdot10^{-15}$ & $2.1988\cdot10^{-15}$ & $2.4117\cdot10^{-15}$ & $0.\cdot10^{-15}$ & $0.\cdot10^{-16}$
\\\bottomrule
\end{tabular}
\end{table}

\section{Appendix}
\subsection{One-dimensional distributions of the fractional Ornstein--Uhlenbeck process}
Let $\set{X_t,t\ge1}$ be the fractional Ornstein--Uhlenbeck process defined by~\eqref{eq:sde}.
\begin{lemma}\label{l:distr}
The random variable $X_t$ has normal distribution
$\mathcal N\left(x_0e^{\theta t}\!,v(\theta,t)\right)$,
with variance
\begin{equation}\label{eq:v}
v(\theta,t)=H\int_0^ts^{2H-1}\left(e^{\theta s}+e^{\theta(2t-s)}\right)ds.
\end{equation}
\end{lemma}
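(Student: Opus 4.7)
The plan is to handle Gaussianity, the mean, and the variance in turn. Since $B^H$ has a continuous modification, the Riemann integral $\int_0^t e^{-\theta s}B^H_s\,ds$ is an almost sure limit of finite linear combinations of the jointly Gaussian $B^H_{s_i}$, and is therefore itself Gaussian. Representation~\eqref{eq:solution} then exhibits $X_t$ as a linear combination of Gaussians, so $X_t$ is Gaussian; taking expectations termwise in~\eqref{eq:solution} and using $\E B^H_s=0$ gives $\E X_t=x_0 e^{\theta t}$ immediately.

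For the variance, I would set $Y_t=X_t-x_0 e^{\theta t}=B^H_t+\theta e^{\theta t}\int_0^t e^{-\theta s}B^H_s\,ds$ and, after expanding the square and invoking Fubini, obtain
\[
\E[Y_t^2]=t^{2H}+2\theta e^{\theta t}\int_0^t e^{-\theta s}R(s,t)\,ds+\theta^2 e^{2\theta t}\int_0^t\!\int_0^t e^{-\theta(s+u)}R(s,u)\,ds\,du,
\]
where $R(s,u)=\E[B^H_s B^H_u]=\tfrac12(s^{2H}+u^{2H}-|s-u|^{2H})$. Each of these deterministic integrals would then be evaluated by repeated integration by parts, using $d(s^{2H})=2Hs^{2H-1}\,ds$ to lower the power of $s$ by one; in the double integral the $|s-u|^{2H}$ piece is first symmetrised over $\set{s<u}$ and reduced with the substitution $v=u-s$, after which the inner integration is elementary.

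Once the dust settles, the contributions arrange themselves into four categories: constants proportional to $t^{2H}$, and integrals of $s^{2H-1}$ against $e^{\theta s}$, $e^{\theta(t-s)}$, or $e^{\theta(2t-s)}$, each with explicit coefficients in $H$. I expect the constants and the $e^{\theta(t-s)}$ terms to cancel identically across the three pieces, while the $e^{\theta s}$ and $e^{\theta(2t-s)}$ contributions combine with coefficient $H$, leaving $\E[Y_t^2]=H\int_0^t s^{2H-1}(e^{\theta s}+e^{\theta(2t-s)})\,ds=v(\theta,t)$, as claimed.

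The main obstacle is nothing conceptual but the bookkeeping: integration by parts in each of the three pieces spawns several boundary and reduced-order contributions, and one must track them carefully so that the cancellations become visible. As a sanity check I would first verify the Brownian case $H=\tfrac12$, where the formula reduces to $v(\theta,t)=(e^{2\theta t}-1)/(2\theta)=\int_0^t e^{2\theta(t-s)}\,ds$, matching the classical Ornstein--Uhlenbeck variance, and also confirm $v(\theta,0)=0$ and the limit $v(0,t)=t^{2H}$ of the candidate formula as $\theta\to 0$.
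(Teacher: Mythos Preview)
Your proposal is correct and follows essentially the same route as the paper: expand $\E Y_t^2$ into the three pieces via the covariance $R$, symmetrise the $|s-u|^{2H}$ part of the double integral and reduce it with the substitution $v=s-u$, then integrate by parts against $d(s^{2H})$ so that the $t^{2H}$ and mixed-exponential terms cancel and only $H\int_0^t s^{2H-1}(e^{\theta s}+e^{\theta(2t-s)})\,ds$ survives. The paper carries out exactly this computation explicitly, so your outline is on target and only the bookkeeping remains.
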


\begin{proof}
Since $B^H_t$ is a centered Gaussian process, it immediately follows from \eqref{eq:solution} that $X_t$ has normal  distribution with mean
$x_0e^{\theta t}$.
Let us calculate its variance.
We have
\begin{equation}\label{var}\begin{split}
\var &X_t=\E\left(B^H_t+\theta e^{\theta t}\int_0^t e^{-\theta s}B^H_s\,ds\right)^2\\
&=\E\left[\left(B^H_t\right)^2\right] + 2\theta e^{\theta t}\int_0^te^{-\theta s} \E\left[B^H_tB^H_s\right]\,ds
+\theta^2e^{2\theta t}\int_0^t\int_0^te^{-\theta s-\theta u} \E\left[B^H_sB^H_u\right]\,ds\,du\\
& =t^{2H} + \theta e^{\theta t}\int_0^te^{-\theta s} \left(t^{2H}+s^{2H}-(t-s)^{2H}\right)\,ds\\
  &\quad+\frac{\theta^2}{2}e^{2\theta t}\int_0^t\int_0^te^{-\theta s-\theta u}\left(s^{2H}+u^{2H}-\abs{s-u}^{2H}\right)\,ds\,du\\
 &=t^{2H} +e^{\theta t}t^{2H}\left(1-e^{-\theta t}\right)
+\theta e^{\theta t}\int_0^te^{-\theta s} s^{2H}\,ds
- \theta e^{\theta t}\int_0^te^{-\theta(t-u)}u^{2H}\,du\\
 &\quad+\theta^2e^{2\theta t}\int_0^te^{-\theta s}s^{2H}\,ds\int_0^te^{-\theta u}\,du
-\frac{\theta^2}{2}e^{2\theta t}\int_0^t\int_0^te^{-\theta s-\theta u}\abs{s-u}^{2H}ds\,du\\
 &=e^{\theta t}t^{2H}
+\theta e^{\theta t}\int_0^te^{-\theta s} s^{2H}\,ds
- \theta\int_0^te^{\theta s}s^{2H}\,ds\\
&\quad+\theta e^{2\theta t}\left(1-e^{-\theta t}\right)\int_0^te^{-\theta s}s^{2H}\,ds
 -\frac{\theta^2}{2}e^{2\theta t}\int_0^t\int_0^te^{-\theta s-\theta u}\abs{s-u}^{2H}ds\,du\\
 &=e^{\theta t}t^{2H}
- \theta\int_0^te^{\theta s}s^{2H}\,ds
+\theta e^{2\theta t}\int_0^te^{-\theta s}s^{2H}\,ds
-\frac{\theta^2}{2}e^{2\theta t}\int_0^t\int_0^te^{-\theta s-\theta u}\abs{s-u}^{2H}\,ds\,du.
\end{split}\end{equation}
The last summand can be rewritten as follows
\begin{align*}
&\frac{\theta^2}{2}e^{2\theta t}\int_0^t\int_0^te^{-\theta s-\theta u}\abs{s-u}^{2H}\,ds\,du\\
&\quad=\frac{\theta^2}{2}e^{2\theta t}\left(\int_0^t\int_0^se^{-\theta s-\theta u}(s-u)^{2H}\,du\,ds+\int_0^t\int_s^te^{-\theta s-\theta u}(u-s)^{2H}\,du\,ds\right)\\
&\quad=\theta^2e^{2\theta t}\int_0^t\int_0^se^{-\theta s-\theta u}(s-u)^{2H}\,du\,ds
=\theta^2e^{2\theta t}\int_0^t\int_0^se^{-2\theta s+\theta v}v^{2H}\,dv\,ds\\
&\quad=\frac\theta2 e^{2\theta t}\int_0^te^{-\theta v}v^{2H}\,dv
-\frac\theta2 \int_0^te^{\theta v}v^{2H}\,dv.
\end{align*}
Substituting the latter value  into the above formula \eqref{var} for $\var X_t$,   we get
\begin{align*}
\var X_t&=
e^{\theta t}t^{2H}
-\frac\theta2\int_0^te^{\theta s}s^{2H}\,ds
+\frac\theta2 e^{2\theta t}\int_0^te^{-\theta s}s^{2H}\,ds\\
&=e^{\theta t}t^{2H}
-\frac12\int_0^ts^{2H}\,d\left(e^{\theta s}+e^{2\theta t-\theta s}\right)
=H\int_0^ts^{2H-1}\left(e^{\theta s}+e^{2\theta t-\theta s}\right)ds.\qedhere
\end{align*}
\end{proof}

Let us investigate the asymptotical behavior of the function $v(\theta,t)$, as $t\to\infty$.
\begin{lemma}\label{l:asymp_v}
\begin{enumerate}[(i)]
\item If $\theta>0$, then
$v(\theta,t)\sim\frac{H\Gamma(2H)}{\theta^{2H}}e^{2\theta t}$, as $t\to\infty$.
\item If $\theta<0$, then
$v(\theta,t)\to\frac{H\Gamma(2H)}{(-\theta)^{2H}}$, as $t\to\infty$.
\item $v(0,t)=t^{2H}$, $t\ge0$.
\end{enumerate}
\end{lemma}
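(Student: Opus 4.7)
The proof splits into three separate asymptotic questions for the function
\[
v(\theta,t)=H\int_0^t s^{2H-1}e^{\theta s}\,ds+He^{2\theta t}\int_0^t s^{2H-1}e^{-\theta s}\,ds=:v_1(\theta,t)+v_2(\theta,t),
\]
so the plan is to analyze $v_1$ and $v_2$ separately in each regime.

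For part $(iii)$ I would simply substitute $\theta=0$: the integrand becomes $2Hs^{2H-1}$, and a one-line antiderivative yields $v(0,t)=t^{2H}$.

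For part $(ii)$, the standing assumption $\theta<0$ turns $-\theta$ into a positive rate parameter. In $v_1(\theta,t)$, by the change of variables $u=-\theta s$ and the definition of the Gamma function,
\[
v_1(\theta,t)=H\int_0^t s^{2H-1}e^{\theta s}\,ds\longrightarrow H\int_0^\infty s^{2H-1}e^{\theta s}\,ds=\frac{H\,\Gamma(2H)}{(-\theta)^{2H}}
\]
as $t\to\infty$. For $v_2(\theta,t)$ the key observation is that for $s\in[0,t]$ and $\theta<0$ we have $\theta(2t-s)\le\theta t$, so
\[
v_2(\theta,t)=H\int_0^t s^{2H-1}e^{\theta(2t-s)}\,ds\le He^{\theta t}\int_0^t s^{2H-1}\,ds=\tfrac12 t^{2H}e^{\theta t}\longrightarrow 0,
\]
which is the desired negligibility. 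Combining the two limits gives the constant stated in $(ii)$.

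For part $(i)$, with $\theta>0$ the roles reverse: $v_2$ is the dominant term. Writing $v_2(\theta,t)=He^{2\theta t}\int_0^t s^{2H-1}e^{-\theta s}\,ds$, monotone convergence delivers
\[
\int_0^t s^{2H-1}e^{-\theta s}\,ds\longrightarrow\int_0^\infty s^{2H-1}e^{-\theta s}\,ds=\frac{\Gamma(2H)}{\theta^{2H}},
\]
hence $v_2(\theta,t)\sim\frac{H\,\Gamma(2H)}{\theta^{2H}}e^{2\theta t}$. It remains to show $v_1(\theta,t)=o(e^{2\theta t})$. I would apply L'Hôpital's rule to the ratio $v_1(\theta,t)/e^{2\theta t}$: the derivative of the numerator is $Ht^{2H-1}e^{\theta t}$ and of the denominator is $2\theta e^{2\theta t}$, whose ratio is $\frac{H}{2\theta}t^{2H-1}e^{-\theta t}\to 0$. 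Adding the two pieces yields the claimed equivalent.

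The only delicate point is part $(i)$, where one must be careful that the integrand $s^{2H-1}$ is not monotone (it blows up at $0$ when $H<1/2$), so a naive pointwise bound on $s^{2H-1}$ would fail; this is precisely why I would prefer the L'Hôpital argument (which only needs integrability on $[0,t]$) over a direct supremum estimate. Everything else reduces to the Gamma-integral identity $\int_0^\infty s^{2H-1}e^{-as}\,ds=\Gamma(2H)/a^{2H}$ for $a>0$.
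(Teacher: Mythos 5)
Your proof is correct, and for part $(i)$ it follows essentially the paper's route: the authors also divide by $e^{2\theta t}$, obtaining $\frac{v(\theta,t)}{e^{2\theta t}}=He^{-2\theta t}\int_0^t s^{2H-1}e^{\theta s}\,ds+H\int_0^t s^{2H-1}e^{-\theta s}\,ds$, and assert the limit $\frac{H\Gamma(2H)}{\theta^{2H}}$ in one line; you supply exactly the details they omit (monotone convergence to the Gamma integral, and the negligibility of the first term). Where you genuinely diverge is part $(ii)$: you prove it directly, computing $v_1(\theta,t)\to\frac{H\Gamma(2H)}{(-\theta)^{2H}}$ and killing $v_2$ with the bound $e^{\theta(2t-s)}\le e^{\theta t}$, whereas the paper observes the symmetry $v(\theta,t)=e^{2\theta t}v(-\theta,t)$, immediate from \eqref{eq:v}, and deduces $(ii)$ from $(i)$ in a single line. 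The symmetry argument is shorter and exposes a structural identity of $v$; your direct route is self-contained and yields an explicit exponential rate $v_2\le\frac12 t^{2H}e^{\theta t}$ for the error term. One small remark on your ``delicate point'' in $(i)$: the L'H\^opital step is valid (the $\infty/\infty$ form only requires the denominator to diverge), but it is avoidable --- the same trick you use in $(ii)$ works here, namely bounding the \emph{exponential} rather than the power: $e^{\theta s}\le e^{\theta t}$ on $[0,t]$ gives $v_1(\theta,t)\le\frac12 t^{2H}e^{\theta t}=o\left(e^{2\theta t}\right)$, since the singularity of $s^{2H-1}$ at the origin is integrable and never needs a pointwise bound.
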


\begin{proof}
$(i)$
If $\theta>0$, then by formula~\eqref{eq:v},
\[
\frac{v(\theta,t)}{e^{2\theta t}}
=He^{-2\theta t}\int_0^ts^{2H-1}e^{\theta s}ds
+H\int_0^ts^{2H-1}e^{-\theta s}ds
\to\frac{H\Gamma(2H)}{\theta^{2H}},
\quad\text{as } t\to\infty.
\]

$(ii)$
Note that
$v(\theta,t)=e^{2\theta t}v(-\theta,t)$,
by~\eqref{eq:v}.
Then the convergence follows from $(i)$.

$(iii)$
The statement follows directly from~\eqref{eq:v}.
\end{proof}

\begin{remark}
For the case $H\in[1/2,1)$ the results of Lemma~\ref{l:asymp_v}  are well-known, see \cite{HN} for $\theta<0$ and \cite{BESO} for $\theta>0$.
\end{remark}

\subsection{Almost sure  limits and bounds for the fractional Ornstein--Uhlenbeck process}

\begin{lemma}[\cite{KMM,kumirase}]
There exists a nonnegative random variable $\zeta$ such that
for all $s>0$, the following inequalities hold true:
\begin{equation}\label{eq:est_KMM}
\sup_{0\le s\le t}\abs{B^H_s}
\le\left(1+t^H \log^2 t\right)\zeta,
\end{equation}
and for $\theta\le0$
\begin{equation}\label{eq:est_KMRS_neg}
\begin{gathered}
\sup_{0\leq s\leq t} |X_s|\leq \left(1+t^H\log^2t\right)\zeta.
\end{gathered}
\end{equation}
Moreover, $\zeta$ has the following property:
there exists $C>0$ such that
$\E\exp\{x\zeta^2\}<\infty$, for any $0<x<C$.
\end{lemma}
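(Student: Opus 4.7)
My plan is to construct $\zeta$ from a dyadic decomposition of the time axis for $B^H$ and then transfer the bound to $X$ via the explicit representation \eqref{eq:solution}.

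For \eqref{eq:est_KMM}, I would first invoke Fernique's theorem (or Landau--Shepp) to obtain that $M := \sup_{0 \le s \le 1} \abs{B^H_s}$ has a sub-Gaussian tail, in particular $\E\exp(xM^2) < \infty$ for some $x > 0$. The self-similarity $\{B^H_{cs}\}_{s\ge 0} \stackrel{d}{=} \{c^H B^H_s\}_{s\ge 0}$ then gives $N_n := \sup_{0 \le s \le 2^n} \abs{B^H_s} \stackrel{d}{=} 2^{nH} M$ for every $n \ge 0$. Setting
\[
\zeta := C\sup_{n \ge 0} \frac{N_n}{1 + 2^{nH}(n+1)^2}
\]
for an appropriate absolute constant $C$, I would check that for $t \in (2^{n-1}, 2^n]$ with $n\ge 1$ (and separately for $t \le 1$) the chain $\sup_{0 \le s \le t}\abs{B^H_s} \le N_n \le \zeta(1 + 2^{nH}(n+1)^2)/C \le \zeta(1 + t^H \log^2 t)$ holds, using $t^H \log^2 t \gtrsim 2^{nH} n^2$ on the $n$-th dyadic block.

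The exponential moment condition would then follow from a union bound:
\[
\P(\zeta > y) \le \sum_{n \ge 0} \P\bigl(N_n > (y/C)(1 + 2^{nH}(n+1)^2)\bigr) \le \sum_{n \ge 0} \P\bigl(M > (y/C)(n+1)^2\bigr),
\]
which is summable and decays like $e^{-c y^2}$ because of the Gaussian tail of $M$, giving $\E\exp(x\zeta^2) < \infty$ for sufficiently small $x>0$.

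For \eqref{eq:est_KMRS_neg}, I would substitute \eqref{eq:solution} and bound each of the three summands of $X_s$ uniformly in $s \in [0, t]$. Since $\theta \le 0$, $\abs{x_0 e^{\theta s}} \le \abs{x_0}$, and $\abs{B^H_s} \le (1 + t^H \log^2 t)\zeta$ by \eqref{eq:est_KMM}. For the integral summand, writing $\theta = -\abs{\theta}$ and pulling the bound on $\abs{B^H_u}$ outside the integral,
\[
\bigl|\theta e^{\theta s}\!\!\int_0^s\!\! e^{-\theta u} B^H_u\, du\bigr|
\le \abs{\theta} e^{-\abs{\theta} s}(1+t^H\log^2 t)\zeta \int_0^s e^{\abs{\theta}u}\,du
\le (1+t^H\log^2 t)\zeta,
\]
while for $\theta = 0$ this term vanishes identically. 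Summing the three estimates and absorbing $\abs{x_0}$ together with the numerical constants into $\zeta$ yields the claim.

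The principal technical difficulty is arranging that one and the same $\zeta$ controls all $t > 0$ and simultaneously carries an exponential moment of its square. The dyadic construction resolves this: the weights $1 + 2^{nH}(n+1)^2$ are comparable to $1 + t^H \log^2 t$ on the $n$-th dyadic block (so $\zeta$ is not forced to absorb extra powers of $t$), yet they grow fast enough in $n$ that the union-bound tail inherits the Gaussian decay of $M$. Everything else---the three-term decomposition for $X_s$ and the $\theta \le 0$ integration---is routine once the fBm bound is in hand.
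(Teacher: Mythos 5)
Your proof is correct, but it takes a genuinely different route from the paper, whose ``proof'' of this lemma is essentially a citation: the bound \eqref{eq:est_KMM} is quoted from \cite{KMM} (see also Eq.~(14) of \cite{kumirase}), the case $\theta<0$ of \eqref{eq:est_KMRS_neg} is quoted from Eq.~(19) of \cite{kumirase}, and the only argument actually given is that $\theta=0$ reduces to \eqref{eq:est_KMM} because then $X_t=x_0+B^H_t$ --- with $x_0$ absorbed into $\zeta$, exactly as in your last step. Your construction is a sound self-contained replacement: Fernique/Landau--Shepp gives $\E\exp\{xM^2\}<\infty$ for $M=\sup_{0\le s\le1}\abs{B^H_s}$; self-similarity gives $N_n\stackrel{d}{=}2^{nH}M$; the weights $w_n=1+2^{nH}(n+1)^2$ are comparable to $1+t^H\log^2t$ on the $n$-th dyadic block (for $n\ge2$ via $t^H\log^2t\ge 2^{(n-1)H}(n-1)^2\log^22$, and for $n\in\{0,1\}$, where $\log^2 t$ can vanish, via the constant $C\ge 1+4\cdot2^H$ hidden in your ``appropriate absolute constant''); and the union bound works because $w_n2^{-nH}\ge(n+1)^2$, so the $n$-th term is dominated by $\P\bigl(M>(y/C)(n+1)^2\bigr)$ and the sum inherits the Gaussian tail of $M$ uniformly in $n$. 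Your one-line drift computation $\abs{\theta}e^{-\abs{\theta}s}\int_0^s e^{\abs{\theta}u}\,du=1-e^{-\abs{\theta}s}\le1$ is precisely the kind of estimate hidden behind the reference to \cite{kumirase}. Two points worth making explicit in a final write-up: after absorbing constants, say $\zeta'=2\zeta+\abs{x_0}$, the \emph{same} enlarged $\zeta'$ must serve in both \eqref{eq:est_KMM} and \eqref{eq:est_KMRS_neg} --- this is automatic since $\zeta'\ge\zeta$ and $1+t^H\log^2t\ge1$, and the exponential moment survives because $(2\zeta+\abs{x_0})^2\le8\zeta^2+2x_0^2$. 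As for what each approach buys: the paper's citation is short and keeps $\zeta$ in the precise form used elsewhere in \cite{kumirase}, while your argument makes the exponential-square-moment property transparent and shows the lemma needs nothing about fractional Brownian motion beyond self-similarity, a.s.\ continuity on compacts, and Fernique's theorem.
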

\begin{proof}
The bound~\eqref{eq:est_KMM} was established in \cite{KMM}, see also Eq.~(14) in~\cite{kumirase}. It also implies that the inequality~\eqref{eq:est_KMRS_neg} holds for $\theta=0$, since in this case $X_t=x_0+B^H_t$.
The bound~\eqref{eq:est_KMRS_neg} for $\theta<0$ was obtained in~\cite[ Eq.~(19)]{kumirase}.
\end{proof}

\begin{lemma}\label{l:as_non-erg}
For $\theta>0$
\[
e^{-\theta t}X_t\to\xi_\theta
\quad\text{a.\,s., as }t\to\infty,
\]
where
$\xi_\theta=x_0+\theta\int_0^\infty
e^{-\theta s}B^H_s\,ds
\simeq\mathcal N\left(x_0,\frac{H\Gamma(2H)}{\theta^{2H}}\right)$.
\end{lemma}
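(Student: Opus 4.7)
The plan is to divide the problem into two halves: first establishing the almost sure convergence, then identifying the limiting distribution.

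For the a.s.\ convergence, I would start from the explicit representation \eqref{eq:solution} and divide by $e^{\theta t}$ to write
\[
e^{-\theta t}X_t=x_0+\theta\int_0^t e^{-\theta s}B^H_s\,ds+e^{-\theta t}B^H_t.
\]
The last term tends to $0$ a.s.\ by the bound \eqref{eq:est_KMM}, since $\bigl|e^{-\theta t}B^H_t\bigr|\le e^{-\theta t}(1+t^H\log^2 t)\zeta\to0$ for $\theta>0$. The same bound yields $\bigl|e^{-\theta s}B^H_s\bigr|\le e^{-\theta s}(1+s^H\log^2 s)\zeta$, which is integrable on $(0,\infty)$ pathwise, so by dominated convergence applied pathwise the integral $\int_0^t e^{-\theta s}B^H_s\,ds$ converges a.s.\ to $\int_0^\infty e^{-\theta s}B^H_s\,ds$. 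Combining the two gives $e^{-\theta t}X_t\to\xi_\theta$ a.s., where $\xi_\theta:=x_0+\theta\int_0^\infty e^{-\theta s}B^H_s\,ds$.

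For the distributional statement, I would exploit the fact that $X_t$ is Gaussian (Lemma~\ref{l:distr}), so $e^{-\theta t}X_t$ is Gaussian with mean $x_0$ and variance $e^{-2\theta t}v(\theta,t)$. By Lemma~\ref{l:asymp_v}\,(i), this variance converges to $H\Gamma(2H)/\theta^{2H}$. Since $\xi_\theta$ is the a.s.\ (hence in distribution) limit of Gaussians with convergent means and variances, it is itself Gaussian with mean $x_0$ and variance $H\Gamma(2H)/\theta^{2H}$.

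The only slightly delicate point is justifying convergence of the improper integral to a finite limit pathwise; but the bound \eqref{eq:est_KMM} gives an integrable majorant on the whole half-line for almost every $\omega$, so this is routine. An alternative route to the variance would be to compute $\theta^2\iint_0^\infty e^{-\theta(s+u)}\E[B^H_sB^H_u]\,ds\,du$ directly by Fubini, but extracting the variance from Lemma~\ref{l:asymp_v}\,(i) is cleaner and sidesteps that computation entirely.
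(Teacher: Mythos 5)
Your proof is correct and takes essentially the same route as the paper's: the same decomposition of $e^{-\theta t}X_t$ via \eqref{eq:solution}, the bound \eqref{eq:est_KMM} to kill the term $e^{-\theta t}B^H_t$ and to control the improper integral, and Lemmas~\ref{l:distr} and~\ref{l:asymp_v}\,(i) to identify the limit as $\mathcal N\bigl(x_0,\frac{H\Gamma(2H)}{\theta^{2H}}\bigr)$. If anything, you are more explicit than the paper, which leaves the integrable-majorant justification of $\int_0^t e^{-\theta s}B^H_s\,ds\to\int_0^\infty e^{-\theta s}B^H_s\,ds$ implicit.
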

\begin{proof}
Note that~\eqref{eq:est_KMM} implies the a.\,s.\ convergence
$e^{-\theta t}B^H_t\to0$, as $t\to \infty$.
Therefore, by~\eqref{eq:solution},
\[
e^{-\theta t}X_t=x_0+\theta\int_0^t e^{-\theta s}B^H_s\,ds+e^{-\theta t}B^H_t
\to x_0+\theta\int_0^\infty e^{-\theta s}B^H_s\,ds
\]
a.\,s., as $t\to\infty$.
It follows from Lemmas~\ref{l:distr} and \ref{l:asymp_v} that the limit has the distribution
$\mathcal N\left(x_0,\frac{H\Gamma(2H)}{\theta^{2H}}\right)$.
\end{proof}

\begin{lemma}\label{l:as_erg}
For $\theta<0$
\[
\frac1T\int_0^TX_t^2\,dt\to\frac{H\Gamma(2H)}{(-\theta)^{2H}},
\]
as $T\to\infty$ a.\,s.\ and in $L^2$.
\end{lemma}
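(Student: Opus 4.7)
The plan is to first establish the $L^2$ convergence by a direct moment computation, and then to upgrade to almost sure convergence via a Borel--Cantelli argument along a polynomial subsequence together with a monotonicity/interpolation estimate on the intervals in between.

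First I would handle the mean. By Lemma~\ref{l:distr} the random variable $X_t$ is Gaussian with $\E X_t = x_0 e^{\theta t}$ and $\var X_t = v(\theta,t)$, so
\[
\E X_t^2 = x_0^2 e^{2\theta t} + v(\theta,t) \longrightarrow \sigma^2 := \frac{H\Gamma(2H)}{(-\theta)^{2H}},
\quad t\to\infty,
\]
by Lemma~\ref{l:asymp_v}$(ii)$ and since $\theta<0$. A Cesàro argument then gives $\E\bigl[\tfrac{1}{T}\!\int_0^T X_t^2\,dt\bigr]\to\sigma^2$.

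For the $L^2$ statement I would show that $\var\bigl(\tfrac{1}{T}\!\int_0^T X_t^2\,dt\bigr)\to0$. Since $(X_s,X_t)$ is jointly Gaussian, Isserlis' formula gives
\[
\cov(X_s^2,X_t^2) = 2\,\cov(X_s,X_t)^2 + 4\,\E X_s\,\E X_t\,\cov(X_s,X_t),
\]
so it is enough to prove that $|\cov(X_s,X_t)|$ is uniformly bounded and decays in $|t-s|$. Using the explicit representation~\eqref{eq:solution} together with the covariance of $B^H$, one computes $\cov(X_s,X_t)$ as an explicit Riemann integral; for $\theta<0$ the contribution from the initial condition dies out and the remaining kernel behaves, up to an exponentially small remainder, like the covariance of the stationary version $\widetilde X_t=\int_{-\infty}^t e^{\theta(t-r)}\,dB^H_r$. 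That stationary covariance $\widetilde r(\tau)$ is known (and can be checked by direct calculation) to decay as $|\tau|^{2H-2}$ for $H\ne 1/2$ and exponentially for $H=1/2$, in particular $|\widetilde r(\tau)|\to 0$. Substituting the resulting bound into the double integral
\[
\var\!\left(\frac{1}{T}\int_0^T X_t^2\,dt\right) = \frac{1}{T^2}\int_0^T\!\!\int_0^T \cov(X_s^2,X_t^2)\,ds\,dt
\]
and using $\E X_s\to 0$ exponentially together with dominated convergence yields the desired $o(1)$ bound; in particular the $L^2$ convergence follows.

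For the almost sure convergence I would use a standard subsequence trick. Set $Y_T = \tfrac{1}{T}\int_0^T X_t^2\,dt-\sigma^2$ and $T_n=n^\alpha$ for some $\alpha>0$. Since $Y_T$ lies in the second Wiener chaos with respect to $B^H$, the hypercontractivity/equivalence of $L^p$-norms in a fixed chaos converts the $L^2$-bound $\E Y_T^2 = O(T^{-\beta})$ (with $\beta>0$ coming from the covariance decay above) into $\E Y_T^{2p} = O(T^{-p\beta})$ for every $p\ge 1$. Choosing $p$ and $\alpha$ with $p\alpha\beta>1$ and applying Markov's inequality with Borel--Cantelli gives $Y_{T_n}\to 0$ a.s. To interpolate between the $T_n$, I would exploit monotonicity: for $T_n\le T\le T_{n+1}$,
\[
\frac{T_n}{T_{n+1}}\cdot\frac{1}{T_n}\int_0^{T_n}\!\!X_t^2\,dt
\le \frac{1}{T}\int_0^T X_t^2\,dt
\le \frac{T_{n+1}}{T_n}\cdot\frac{1}{T_{n+1}}\int_0^{T_{n+1}}\!\!X_t^2\,dt,
\]
and $T_{n+1}/T_n\to 1$, so the a.s. limit along $\{T_n\}$ extends to the continuous limit.

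The main obstacle I expect is the sharp decay estimate for $\cov(X_s,X_t)$ uniformly in $s,t$: one has to keep track of three distinct contributions (the transient $x_0 e^{\theta t}$ piece, the cross terms in~\eqref{eq:solution}, and the stationary kernel), and verify that in all three cases the covariance is integrable in the lag $|t-s|$ with an extra gain needed when $H$ is close to $1$. Once that estimate is in hand, the rest is a standard Wiener-chaos/Borel--Cantelli routine.
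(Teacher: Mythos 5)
Your proposal is correct in outline, but it takes a genuinely different route from the paper. The paper's proof is qualitative and short: it invokes the result of Cheridito--Kawaguchi--Maejima that the stationary process $Y_t=\int_{-\infty}^te^{\theta(t-s)}\,dB^H_s$ is Gaussian, stationary and \emph{ergodic}, applies the ergodic theorem to get $\frac1T\int_0^TY_t^2\,dt\to\E Y_0^2$ a.\,s.\ and in $L^2$, removes the transient via the exact decomposition $X_t=Y_t-e^{\theta t}\eta_\theta$, and identifies $\E Y_0^2=\lim_{t\to\infty}e^{2\theta t}v(-\theta,t)=H\Gamma(2H)(-\theta)^{-2H}$ from Lemmas~\ref{l:distr} and~\ref{l:asymp_v}. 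You instead run a quantitative second-moment argument: Isserlis for the variance of the time average, the known covariance decay $\widetilde r(\tau)\sim c\,\tau^{2H-2}$ of the stationary fOU (this is Theorem~2.3 of the same reference \cite{CKM}), then hypercontractivity on the chaoses of order $\le 2$, Borel--Cantelli along $T_n=n^\alpha$, and monotone interpolation. Note that your underlying input is essentially the same CKM fact used quantitatively --- for stationary Gaussian processes ergodicity follows from $\widetilde r(\tau)\to0$, which is how \cite{CKM} obtain it --- so your route costs more work but buys explicit rates of convergence, which the paper's ergodic-theorem proof does not provide; your identification of the limit constant directly from $\E X_t^2=x_0^2e^{2\theta t}+v(\theta,t)\to H\Gamma(2H)(-\theta)^{-2H}$ is also slightly cleaner than the paper's limit computation for $\E Y_0^2$.

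Two small repairs to your write-up. First, the phrase ``the covariance is integrable in the lag'' is literally false in general: $\widetilde r(\tau)\asymp\tau^{2H-2}$ is not integrable for $H\ge1/2$, and $\widetilde r(\tau)^2\asymp\tau^{4H-4}$ is not integrable for $H\ge3/4$. Your argument survives anyway, because you only need $\operatorname{Var}\bigl(\frac1T\int_0^TX_t^2\,dt\bigr)=O(T^{-\beta})$ for some $\beta>0$, and the Cesàro bound $\frac1T\int_0^T\widetilde r(\tau)^2\,d\tau=O\bigl(T^{-\min(1,\,4-4H)}\log T\bigr)$ gives a positive $\beta$ for every $H\in(0,1)$ --- but you should state the rate this way rather than claim integrability. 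Second, for $H<1/2$ the stationary version $\widetilde X_t$ must be defined via the pathwise integration-by-parts construction of \cite[Prop.~A.1]{CKM} (the same point the paper makes), since the Wiener integral against $B^H$ is not directly available there; with that reference your comparison of $\operatorname{Cov}(X_s,X_t)$ to $\widetilde r(t-s)$ plus exponentially small transients is legitimate.
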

\begin{proof}
It was proved in~\cite{CKM} that in this case the process
$Y_t=\int_{-\infty}^te^{\theta(t-s)}\,dB^H_s$
is Gaussian, stationary, and ergodic.
The integral with respect to the fractional Brownian motion here exists as a path-wise Riemann-Stieltjes integral, and can be calculated using integration by parts, see~\cite[Prop.~A.1]{CKM}. It follows from the ergodic theorem that
\[
\frac1T\int_0^TY_t^2\,dt\to\E Y_0^2,
\]
as $T\to\infty$ a.\,s.\ and in $L^2$.
The process $X_t$ can be expressed as
$X_t=Y_t-e^{\theta t}\eta_\theta$,
where
\[
\eta_\theta=\theta\int_{-\infty}^0 e^{-\theta s}B^H_s\,ds-x_0
\]
is a normal random variable.
Using this representation, it is easy  to show that
\[
\lim_{T\to\infty}\frac1T\int_0^TX_t^2\,dt
=\lim_{T\to\infty}\frac1T\int_0^TY_t^2\,dt
=\E Y_0^2.
\]
The value of the limit can be calculated applying Lemmas~\ref{l:distr} and \ref{l:asymp_v}.
Indeed,
\begin{align*}
\E Y_0^2&=\E\left(\int_{-\infty}^0e^{-\theta s}\,dB^H_s\right)^2
=\lim_{t\to-\infty}\E\left(-e^{-\theta t}B^H_{t}+\theta\int_{t}^0e^{-\theta s}B^H_s\,ds\right)^2\\
&=\lim_{t\to\infty}\E\left(-e^{\theta t}B^H_{-t}+\theta\int_{0}^te^{\theta s}B^H_{-s}\,ds\right)^2\\
&=\lim_{t\to\infty}e^{2\theta t}\left(\E\left(B^H_{-t}\right)^2
-2\theta e^{-\theta t}\int_{0}^te^{\theta s}\E B^H_{-t}B^H_{-s}\,ds\right.\\
&\quad+\left.\theta^2e^{-2\theta t}\int_0^t\int_0^te^{\theta(s+u)}\E B^H_{-s}B^H_{-u}\,ds\,du\right)\\
&=\lim_{t\to\infty}e^{2\theta t}v(-\theta,t)
=\frac{H\Gamma(2H)}{(-\theta)^{2H}}.\qedhere
\end{align*}
\end{proof}

\begin{lemma}\label{l:moments}
Let $\theta<0$. Then for any $p\ge1$, there exist positive constants $c_p$ and $C_p$ such that
\begin{gather}
\E\abs{X_t}^p\le c_p \quad \text{for }t\ge0,\label{eq:bound1}\\
\E\abs{X_t-X_s}^p\le C_p\abs{t-s}^{pH} \quad \text{for }\abs{t-s}\le1.\label{eq:bound2}
\end{gather}
\end{lemma}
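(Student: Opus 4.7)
The plan is to exploit the Gaussianity of $X_t$ provided by Lemma~\ref{l:distr} together with the integral form of the defining equation~\eqref{eq:sde}, reducing everything to standard Gaussian moment identities and Minkowski's inequality.

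For the uniform moment bound~\eqref{eq:bound1}, Lemma~\ref{l:distr} gives $X_t\sim\mathcal N(x_0 e^{\theta t},v(\theta,t))$. Because $\theta<0$, the mean satisfies $\abs{x_0 e^{\theta t}}\le\abs{x_0}$ for $t\ge 0$. Moreover $v(\theta,\cdot)$ is continuous on $[0,\infty)$ with $v(\theta,0)=0$ and, by Lemma~\ref{l:asymp_v}$(ii)$, converges to a finite limit as $t\to\infty$, so $M:=\sup_{t\ge 0}v(\theta,t)<\infty$. Representing $X_t$ in law as $x_0 e^{\theta t}+\sqrt{v(\theta,t)}\,Z$ with $Z\sim\mathcal N(0,1)$ immediately gives
\[
\E\abs{X_t}^p\le 2^{p-1}\bigl(\abs{x_0}^p+M^{p/2}\E\abs{Z}^p\bigr)=:c_p,
\]
uniformly in $t\ge 0$.

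For the H\"older-type bound~\eqref{eq:bound2}, subtracting~\eqref{eq:sde} at $s$ from its value at $t$ yields, for $0\le s\le t$,
\[
X_t-X_s=\theta\int_s^t X_u\,du+(B^H_t-B^H_s).
\]
Applying Minkowski's integral inequality in the $L^p(\Omega)$-norm and using the previous step,
\[
\norm{X_t-X_s}_{L^p}\le\abs{\theta}\int_s^t\norm{X_u}_{L^p}\,du+\norm{B^H_t-B^H_s}_{L^p}\le\abs{\theta}c_p^{1/p}(t-s)+K_p(t-s)^H,
\]
where $K_p=(\E\abs{Z}^p)^{1/p}$, because $B^H_t-B^H_s\sim\mathcal N(0,(t-s)^{2H})$. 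For $\abs{t-s}\le 1$ and $H\in(0,1)$ one has $t-s\le(t-s)^H$, so the right-hand side is dominated by $(\abs{\theta}c_p^{1/p}+K_p)(t-s)^H$; raising to the $p$-th power produces~\eqref{eq:bound2} with $C_p=(\abs{\theta}c_p^{1/p}+K_p)^p$.

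I do not anticipate any serious obstacle. The one point worth checking carefully is the uniform bound $M<\infty$ on the variance, which reduces to continuity of $v(\theta,\cdot)$ on $[0,\infty)$ plus the convergent asymptotics of Lemma~\ref{l:asymp_v}$(ii)$; the remaining arithmetic is a routine use of Gaussian moment formulas and Minkowski's inequality.
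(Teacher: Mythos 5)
Your proof is correct. For \eqref{eq:bound1} you argue exactly as the paper does: bounded mean $\abs{x_0}e^{\theta t}\le\abs{x_0}$ together with a uniform variance bound obtained from Lemma~\ref{l:asymp_v}$(ii)$ (you are in fact slightly more careful than the paper in spelling out why $\sup_{t\ge0}v(\theta,t)<\infty$, via continuity of $v(\theta,\cdot)$ plus the finite limit at infinity). For \eqref{eq:bound2} you use the same decomposition $X_t-X_s=\theta\int_s^t X_u\,du+\left(B^H_t-B^H_s\right)$, but you handle general $p$ by a genuinely different device: the paper first proves the case $p=2$ (via $(a+b)^2\le 2a^2+2b^2$ and Cauchy--Schwarz on the time integral) and then upgrades to arbitrary $p\ge1$ using the normality of $X_t-X_s$, i.e.\ the equivalence of Gaussian moments ``in the standard way''; you instead work directly in $L^p(\Omega)$ with Minkowski's integral inequality, obtaining $\norm{X_t-X_s}_{L^p}\le\abs{\theta}c_p^{1/p}(t-s)+K_p(t-s)^H$ and concluding with $t-s\le(t-s)^H$ for $t-s\le1$. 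Both routes are valid. Yours is more robust: it never uses that the solution itself is Gaussian, only the moment bound \eqref{eq:bound1} and the moments of the fBm increment, so it would survive non-Gaussian perturbations of the model; the paper's two-step argument is shorter given Gaussianity. A small point that your route sidesteps entirely: the paper's Gaussian-moment upgrade must be applied to the non-centered variable $X_t-X_s$, which tacitly uses the (easy, but usually unstated) fact that $\E\abs{Y}^p\le\kappa_p\left(\E Y^2\right)^{p/2}$ holds for all Gaussian $Y$, not only centered ones.
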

\begin{proof}
By~Lemmas~\ref{l:distr} and~\ref{l:asymp_v}~$(ii)$, $X_t$ is a normal random variable,
\begin{gather*}
\abs{\E X_t}=\abs{x_0}e^{\theta t}\le\abs{x_0}<\infty,\\
\var X_t\to\frac{H\Gamma(2H)}{(-\theta)^{2H}}<\infty,
\end{gather*}
whence \eqref{eq:bound1} follows.

Assume that $t\ge s\ge0$ and $t-s\le1$.
Let us show that
\begin{equation}\label{eq:bound3}
\E\abs{X_t-X_s}^2\le C_2\abs{t-s}^{2H},
\end{equation}
where $C_2$ is a positive constant.
By~\eqref{eq:sde},
\[
\abs{X_t-X_s}\le\abs{\theta}\int_s^t\abs{X_u}du+\abs{B^H_t-B^H_s}.
\]
Therefore, using \eqref{eq:bound1}, we get
\begin{align*}
\E(X_t-X_s)^2
&\le2\abs{\theta}^2\E\left(\int_s^t\abs{X_u}du\right)^2+2\E\left(B^H_t-B^H_s\right)^2\\
&\le2\abs{\theta}^2(t-s)\int_s^t\E\abs{X_u}^2du+2\left(t-s\right)^{2H}\\
&\le2\abs{\theta}^2c_2^2(t-s)^2+2(t-s)^{2H}
\le2\left(\abs{\theta}^2c_2^2+1\right)(t-s)^{2H}.
\end{align*}
Thus, \eqref{eq:bound3} is proved. Since $X_t-X_s$ has a normal distribution, \eqref{eq:bound2} follows from \eqref{eq:bound3} in the standard way.
\end{proof}

\section*{References}
\bibliographystyle{model1b-num-names}
\bibliography{hypothOU}

\begin{thebibliography}{18}
\expandafter\ifx\csname natexlab\endcsname\relax\def\natexlab#1{#1}\fi
\providecommand{\url}[1]{\texttt{#1}}
\providecommand{\href}[2]{#2}
\providecommand{\path}[1]{#1}
\providecommand{\DOIprefix}{doi:}
\providecommand{\ArXivprefix}{arXiv:}
\providecommand{\URLprefix}{URL: }
\providecommand{\Pubmedprefix}{pmid:}
\providecommand{\doi}[1]{\href{http://dx.doi.org/#1}{\path{#1}}}
\providecommand{\Pubmed}[1]{\href{pmid:#1}{\path{#1}}}
\providecommand{\bibinfo}[2]{#2}
\ifx\xfnm\relax \def\xfnm[#1]{\unskip,\space#1}\fi
\bibitem[{Belfadli et~al.(2011)Belfadli, Es-Sebaiy and Ouknine}]{BESO}
\bibinfo{author}{R.~Belfadli}, \bibinfo{author}{K.~Es-Sebaiy},
  \bibinfo{author}{Y.~Ouknine}, \bibinfo{title}{{Parameter estimation for
  fractional Ornstein--Uhlenbeck processes: non-ergodic case}},
  \bibinfo{journal}{{Frontiers in Science and Engineering}} \bibinfo{volume}{1}
  (\bibinfo{year}{2011}) \bibinfo{pages}{1--16}.
\bibitem[{{Bianchi} et~al.(2013){Bianchi}, {Pantanella} and {Pianese}}]{multi}
\bibinfo{author}{S.~{Bianchi}}, \bibinfo{author}{A.~{Pantanella}},
  \bibinfo{author}{A.~{Pianese}}, \bibinfo{title}{{Modeling stock prices by
  multifractional Brownian motion: an improved estimation of the pointwise
  regularity}}, \bibinfo{journal}{{Quant. Finance}} \bibinfo{volume}{13}
  (\bibinfo{year}{2013}) \bibinfo{pages}{1317--1330}.
\bibitem[{C{\'e}nac and Es-Sebaiy(2015)}]{CES}
\bibinfo{author}{P.~C{\'e}nac}, \bibinfo{author}{K.~Es-Sebaiy},
  \bibinfo{title}{Almost sure central limit theorems for random ratios and
  applications to {LSE} for fractional {O}rnstein-{U}hlenbeck processes},
  \bibinfo{journal}{Probab. Math. Statist.} \bibinfo{volume}{35}
  (\bibinfo{year}{2015}) \bibinfo{pages}{285--300}.
\bibitem[{{Cheridito} et~al.(2003){Cheridito}, {Kawaguchi} and {Maejima}}]{CKM}
\bibinfo{author}{P.~{Cheridito}}, \bibinfo{author}{H.~{Kawaguchi}},
  \bibinfo{author}{M.~{Maejima}}, \bibinfo{title}{{Fractional
  {O}rnstein--{U}hlenbeck processes}}, \bibinfo{journal}{{Electron. J.
  Probab.}} \bibinfo{volume}{8} (\bibinfo{year}{2003}).
\bibitem[{{Es-Sebaiy}(2013)}]{Es-Sebaiy}
\bibinfo{author}{K.~{Es-Sebaiy}}, \bibinfo{title}{{Berry-Ess\'een bounds for
  the least squares estimator for discretely observed fractional
  Ornstein--Uhlenbeck processes}}, \bibinfo{journal}{{Stat. Probab. Lett.}}
  \bibinfo{volume}{83} (\bibinfo{year}{2013}) \bibinfo{pages}{2372--2385}.
\bibitem[{Es-sebaiy and Ndiaye(2014)}]{ESN}
\bibinfo{author}{K.~Es-sebaiy}, \bibinfo{author}{D.~Ndiaye}, \bibinfo{title}{On
  drift estimation for non-ergodic fractional {O}rnstein--{U}hlenbeck process
  with discrete observations}, \bibinfo{journal}{Afr. Stat.}
  \bibinfo{volume}{9} (\bibinfo{year}{2014}) \bibinfo{pages}{615--625}.
\bibitem[{{Hu} and {Nualart}(2010)}]{HN}
\bibinfo{author}{Y.~{Hu}}, \bibinfo{author}{D.~{Nualart}},
  \bibinfo{title}{{Parameter estimation for fractional Ornstein-Uhlenbeck
  processes}}, \bibinfo{journal}{{Stat. Probab. Lett.}} \bibinfo{volume}{80}
  (\bibinfo{year}{2010}) \bibinfo{pages}{1030--1038}.
\bibitem[{{Hu} and {Song}(2013)}]{HuSong13}
\bibinfo{author}{Y.~{Hu}}, \bibinfo{author}{J.~{Song}},
  \bibinfo{title}{{Parameter estimation for fractional Ornstein-Uhlenbeck
  processes with discrete observations}}, in: \bibinfo{booktitle}{{Malliavin
  calculus and stochastic analysis. A Festschrift in honor of David Nualart.
  New York, NY: Springer}}, \bibinfo{year}{2013}, pp.
  \bibinfo{pages}{427--442}.
\bibitem[{Jost(2006)}]{Jost06}
\bibinfo{author}{C.~Jost}, \bibinfo{title}{Transformation formulas for
  fractional {B}rownian motion}, \bibinfo{journal}{Stochastic Process. Appl.}
  \bibinfo{volume}{116} (\bibinfo{year}{2006}) \bibinfo{pages}{1341--1357}.
\bibitem[{{Kleptsyna} and {Le Breton}(2002)}]{KB}
\bibinfo{author}{M.~{Kleptsyna}}, \bibinfo{author}{A.~{Le Breton}},
  \bibinfo{title}{{Statistical analysis of the fractional Ornstein--Uhlenbeck
  type process}}, \bibinfo{journal}{{Stat. Inference Stoch. Process.}}
  \bibinfo{volume}{5} (\bibinfo{year}{2002}) \bibinfo{pages}{229--248}.
\bibitem[{Kloeden and Neuenkirch(2007)}]{Kloeden07}
\bibinfo{author}{P.E. Kloeden}, \bibinfo{author}{A.~Neuenkirch},
  \bibinfo{title}{The pathwise convergence of approximation schemes for
  stochastic differential equations}, \bibinfo{journal}{LMS J. Comput. Math.}
  \bibinfo{volume}{10} (\bibinfo{year}{2007}) \bibinfo{pages}{235--253}.
\bibitem[{Kozachenko et~al.(2014)Kozachenko, Melnikov and Mishura}]{KMM}
\bibinfo{author}{Y.~Kozachenko}, \bibinfo{author}{A.~Melnikov},
  \bibinfo{author}{Y.~Mishura}, \bibinfo{title}{On drift parameter estimation
  in models with fractional {B}rownian motion}, \bibinfo{journal}{Statistics}
  (\bibinfo{year}{2014}). \bibinfo{note}{Advance online publication. doi:
  10.1080/02331888.2014.907294}.
\bibitem[{Kubilius et~al.(2015)Kubilius, Mishura, Ralchenko and
  Seleznjev}]{kumirase}
\bibinfo{author}{K.~Kubilius}, \bibinfo{author}{Y.~Mishura},
  \bibinfo{author}{K.~Ralchenko}, \bibinfo{author}{O.~Seleznjev},
  \bibinfo{title}{Consistency of the drift parameter estimator for the
  discretized fractional {O}rnstein--{U}hlenbeck process with {H}urst index
  {$H\in(0,\frac{1}{2})$}}, \bibinfo{journal}{Electron. J. Stat.}
  \bibinfo{volume}{9} (\bibinfo{year}{2015}) \bibinfo{pages}{1799--1825}.
\bibitem[{Moers(2012)}]{Moers12}
\bibinfo{author}{M.~Moers}, \bibinfo{title}{Hypothesis testing in a fractional
  {O}rnstein-{U}hlenbeck model}, \bibinfo{journal}{Int. J. Stoch. Anal.}
  (\bibinfo{year}{2012}) \bibinfo{pages}{Art. ID 268568, 23}.
\bibitem[{{Tanaka}(2013)}]{Tanaka13}
\bibinfo{author}{K.~{Tanaka}}, \bibinfo{title}{{Distributions of the maximum
  likelihood and minimum contrast estimators associated with the fractional
  Ornstein--Uhlenbeck process}}, \bibinfo{journal}{{Stat. Inference Stoch.
  Process.}} \bibinfo{volume}{16} (\bibinfo{year}{2013})
  \bibinfo{pages}{173--192}.
\bibitem[{Tanaka(2015)}]{Tanaka15}
\bibinfo{author}{K.~Tanaka}, \bibinfo{title}{Maximum likelihood estimation for
  the non-ergodic fractional {O}rnstein--{U}hlenbeck process},
  \bibinfo{journal}{{Stat. Inference Stoch. Process.}} \bibinfo{volume}{18}
  (\bibinfo{year}{2015}) \bibinfo{pages}{315--332}.
\bibitem[{{Tudor} and {Viens}(2007)}]{tudorviens}
\bibinfo{author}{C.A. {Tudor}}, \bibinfo{author}{F.G. {Viens}},
  \bibinfo{title}{{Statistical aspects of the fractional stochastic calculus}},
  \bibinfo{journal}{{Ann. Stat.}} \bibinfo{volume}{35} (\bibinfo{year}{2007})
  \bibinfo{pages}{1183--1212}.
\bibitem[{{Xiao} et~al.(2011){Xiao}, {Zhang} and {Xu}}]{xiaoZX}
\bibinfo{author}{W.~{Xiao}}, \bibinfo{author}{W.~{Zhang}},
  \bibinfo{author}{W.~{Xu}}, \bibinfo{title}{{Parameter estimation for
  fractional Ornstein--Uhlenbeck processes at discrete observation}},
  \bibinfo{journal}{{Appl. Math. Modelling}} \bibinfo{volume}{35}
  (\bibinfo{year}{2011}) \bibinfo{pages}{4196--4207}.

\end{thebibliography}

\end{document}